\newcounter{defc}
\newtheorem{thm}{Theorem}
\newtheorem{lemma}[thm]{Lemma}
\newtheorem{prop}[thm]{Proposition}
\newtheorem{cor}[thm]{Corollary}
\newtheorem{claim}[thm]{Claim}
\numberwithin{thm}{section}
\theoremstyle{definition}
\newtheorem{defn}[defc]{Definition}
\newcommand{\dist}{\mathrm{dist}}
\newcommand{\diam}{\mathrm{diam}}
\newcommand{\Hom}{\mathsf{Hom}}
\newcommand{\Lip}{\mathrm{Lip}}
\newcommand{\R}{\mathbb{R}}
\newcommand{\Z}{\mathbb{Z}}
\newcommand{\A}{\mathcal{A}}
\newcommand{\C}{\mathcal{C}}
\newcommand{\cP}{\mathcal{P}}
\renewcommand{\P}{\mathbb{P}}
\renewcommand{\Pr}{\mathbb{P}}
\renewcommand{\S}{\mathcal{S}}
\newcommand{\E}{\mathbb{E}}
\newcommand{\n}{N}
\renewcommand{\b}{\partial}
\newcommand{\T}{\mathbb{T}}
\newcommand{\Var}{\mathop \mathrm{Var}}
\DeclareMathOperator{\phase}{phase}
\title{Lipschitz Functions on Expanders are Typically Flat}
\author{Ron Peled\thanks{School of Mathematical Sciences, Tel Aviv University, Tel Aviv, Israel. E-mail address: {\tt peledron@post.tau.ac.il}. Supported by an ISF grant and an IRG grant.}
\and Wojciech Samotij\thanks{School of Mathematical Sciences, Tel
Aviv University, Tel Aviv, Israel; and Trinity College,
Cambridge~CB2~1TQ, UK. E-mail address: {\tt ws299@cam.ac.uk}.
Research supported in part by ERC Advanced Grant DMMCA and a~Trinity
College JRF.} \and Amir Yehudayoff\thanks{Department of Mathematics,
Technion-IIT, Haifa, Israel. E-mail address: {\tt
amir.yehudayoff@gmail.com}. Horev fellow -- supported by the Taub
Foundation. Research supported by grants from ISF and BSF.} }
\begin{document}

\date{}

\maketitle

\begin{abstract}
This work studies the typical behavior of random integer-valued
Lipschitz functions on expander graphs with sufficiently good
expansion. We consider two families of functions: $M$-Lip\-schitz
functions (functions that change by at most $M$ along edges) and
integer-ho\-mo\-mor\-phisms (functions that change by exactly $1$ along
edges). We prove that such functions typically exhibit very small
fluctuations. For instance, we show that a uniformly chosen
$M$-Lipschitz function takes only $M+1$ values on most of the graph,
with a double exponential decay for the probability to take other
values.
\end{abstract}

\section{Introduction}

In this work we investigate the typical behavior of random Lipschitz
functions on expander graphs. We focus on the following two models:
An \emph{$M$-Lipschitz function} on a graph $G$ is an integer-valued
function on the vertices of $G$ which changes by at most $M$ between
adjacent vertices. Similarly, a \emph{$\Z$-homomorphism} (or simply a
\emph{homomorphism}) on $G$ is an integer-valued function on the vertices
of $G$ which changes by exactly one between adjacent vertices.
Taking the graph $G$ to be a finite expander graph, we consider the
typical properties of functions chosen uniformly at random from one
of these families (fixing the function value to be zero at some
fixed vertex).

One motivation for this work comes from previous investigations of
$\Z$-homomorphisms on several tree-like graphs \cite{BenHagMos}, on
the hypercube~\cite{Gal, Kah2} and on (finite boxes of) the
lattice $\Z^d$ for large $d$ \cite{Pel}. These suggest that typical
Lipschitz functions on highly connected graphs tend to exhibit very
small fluctuations. Indeed, such behavior can also be expected from
a comparison with random surface models in statistical mechanics,
such as the Gaussian free field (see also \cite{BenSch}). Expander
graphs are natural candidates to test this paradigm, as they are
highly connected graphs which are important for many applications,
see the survey~\cite{HoLiWi}. It is also well-known that most graphs
are expanders.


Additional motivation is to try and understand to what extent the local
behavior of typical Lipschitz functions on a graph is affected by
the global features of the graph. Many expander graphs locally have
the structure of a tree, though globally they are very far from
being a tree. Will the typical Lipschitz function on an expander
graph exhibit locally large fluctuations, as it does on a tree, or
will it exhibit locally small fluctuations, as suggested by the
global structure?

Our results apply to expander graphs with sufficiently good
expansion, in a certain quantitative sense (see
Definitions~\ref{def: good} and \ref{def: good bi}). We show that on
such graphs, the typical Lipschitz function exhibits very small
fluctuations. Thus, although the graph may look like a tree locally,
it is the global structure of it which determines the local
fluctuations. More specifically, we find that a random $M$-Lipschitz function will
take only $M+1$ different values on most of the graph. Moreover, the
probability that at any fixed vertex the function takes a value
which is further than $tM$ from this set of $M+1$ values decays like a
double exponential in $t$. As a result, the maximum value of the
function is, with high probability, of order $\log(\log n)$, where
$n$ is the number of vertices of the graph\footnote{Logarithms
in this text are of base $e$.}.

Similar results are obtained for $\Z$-homomorphisms, where it is
required that the underlying graph is bipartite. There, we find that
the typical function takes predominantly one value on one of the
color classes and two values on the other color class, again with a
double exponential decay of the probability to take other values. In
a somewhat different setting, our methods yield also that
\emph{grounded Lipschitz functions} on $d$-ary trees, i.e.,
functions constrained to take the value zero on all leaves of the
tree, exhibit similarly small fluctuations.

While our methods require the graph to have sufficiently good
expansion, it is not clear to what extent is this requirement
necessary. A discussion with several open questions is presented in
Section~\ref{sec: sum}.

\subsection{$M$-Lipschitz functions}

\label{sec:Lipschitz-functions}

$M$-Lipschitz functions on graphs are defined as follows.

\begin{defn}[$M$-Lipschitz functions]
Let $v_0$ be a fixed vertex in a graph $G$ and let $M$ be a
positive integer. Denote by $\Lip_{v_0}(G;M)$ the family of
$M$-Lipschitz functions from the vertex set of $G$ to $\Z$
that send $v_0$ to the origin, that is, the family of maps $f \colon V(G)
\to \Z$ such that $f(v_0) = 0$ and $|f(u)-f(v)| \leq M$ for every
$\{u,v\} \in E(G)$.
\end{defn}
We note for later use that if $G$ is a finite connected graph, then
$\Lip_{v_0}(G;M)$ is a finite set.

There are several equivalent definitions of expander graphs. Our definition is inspired
by the so-called expander mixing lemma (see~\cite{HoLiWi}), which relates
the edge distribution of a regular graph to the spectral properties of its adjacency
matrix. For two subsets $A, B$ of the vertices of a graph $G$, denote by $E(A,B)$ the set
of pairs $(a, b) \in  A \times B$ such that $\{a,b\}$ is an edge of $G$
and denote $e(A,B) = |E(A,B)|$.

\begin{defn}[Expander]
  \label{defn:expander}
  A $d$-regular $n$-vertex graph $G$ is called a {\em $\lambda$-expander} if for all $S,T \subseteq V(G)$ we have
  \[
  \left| e(S,T) - \frac{d}{n}|S||T| \right| \leq  \lambda \sqrt{|S||T|}.
  \]
\end{defn}

Every $d$-regular graph is an expander with $\lambda=d$ and hence
the definition becomes meaningful only for $\lambda<d$. Simple
examples show that expanders cannot be too good: for $S = \{v\}$ and
$T$ the $d$ neighbors of $v$ in $G$ we obtain $\lambda \geq \sqrt{d}
(1- d/n)$, and for $S = T = \{v\}$ we have $\lambda \geq d/n$ so
that in particular, $\lambda \geq 1/2$.

In our theorems, we consider sufficiently good expanders in the
sense that $\lambda$ is required to be smaller than some specific
function of $d$ and $M$. It can be shown that for $d>2$, most
$d$-regular graphs are good expanders. For example, Friedman showed
that for every $d > 2$ and every $\varepsilon > 0$, with probability
tending to $1$ as $n$ tends to infinity, a uniformly chosen random
$n$-vertex $d$-regular graph is a $(2\sqrt{d-1} +
\varepsilon)$-expander \cite{Fr}.

\begin{defn}[$M$-good expander]
\label{def: good} A graph $G$ is an {\em $M$-good expander} if $G$
is a $d$-regular $\lambda$-ex\-pan\-der with
$\lambda \leq \frac{d}{32(M+1)\log (9Md^2)}$.
\end{defn}

Our main result shows that a \emph{typical} $M$-Lipschitz function
on an $M$-good expander is locally very flat. In particular, such a function
takes values in a set of $M+1$ consecutive integers at all but an
\emph{exponentially} small fraction of the vertices, where
exponentially small is with respect to the parameters of the
expander. The first step towards establishing this property is to
note that \emph{every} $M$-Lipschitz function takes values in a set
of $M+1$ consecutive integers at all but a \emph{polynomially} small
fraction of the vertices.

\begin{lemma}
  \label{lemma:phase}
  Let $G$ be an $n$-vertex $d$-regular $\lambda$-expander and $v_0\in V(G)$. For every $f\in \Lip_{v_0}(G;M)$,
  there exists $k\in\Z$ such that
  \begin{equation}\label{eq:phase_def}
    \left| \big\{v \colon  f(v)\notin\{k,k+1,\ldots, k+M\} \big\} \right| \le \frac{2 \lambda n}{d} .
  \end{equation}
  Moreover, there is a way to associate to each $f\in \Lip_{v_0}(G;M)$ which is
  not identically zero, an interval of the form $\{k,k+1,\ldots,
k+M\}$ satisfying \eqref{eq:phase_def}, which we denote by
$\phase(f)$, in such a manner that $\phase(-f) = - \phase(f)$.
\end{lemma}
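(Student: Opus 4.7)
The plan is to use the expander mixing lemma on the level-set cuts of $f$ to produce a valid interval, and then select a canonical one via a rule that is equivariant under $f \mapsto -f$.

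For the first part, for each $k \in \Z$ set $S_k = \{v : f(v) \leq k\}$ and $T_k = \{v : f(v) \geq k + M + 1\}$. Since $f$ is $M$-Lipschitz, no edge of $G$ joins $S_k$ to $T_k$: such an edge would force a jump of at least $M+1$. Applying Definition~\ref{defn:expander} to $(A,B) = (S_k, T_k)$ with $e(S_k, T_k) = 0$ gives
\[
\frac{d}{n}|S_k||T_k| \leq \lambda \sqrt{|S_k||T_k|},
\]
hence $|S_k||T_k| \leq (\lambda n/d)^2$. I would then let $k_0$ be the largest integer with $|S_{k_0}| \leq \lambda n / d$ (which exists because $|S_k|$ is non-decreasing from $0$ to $n$). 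Then $|S_{k_0+1}| > \lambda n/d$, so the displayed inequality at $k=k_0+1$ forces $|T_{k_0+1}| < \lambda n/d$. Setting $k = k_0+1$, the complement of $\{k, k+1, \ldots, k+M\}$ equals $S_{k_0} \cup T_{k_0+1}$ and has at most $2\lambda n/d$ vertices, proving \eqref{eq:phase_def}.

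For the equivariance part, let $\S(f)$ denote the collection of intervals $[k, k+M]$ satisfying \eqref{eq:phase_def}. The identity $\{v : f(v) \notin I\} = \{v : (-f)(v) \notin -I\}$ yields $\S(-f) = \{-I : I \in \S(f)\}$. I would then select $\phase(f) \in \S(f)$ to be the interval whose midpoint $k + M/2$ has smallest absolute value. Since the midpoints of intervals in $\S(-f)$ are the negatives of those in $\S(f)$, this rule commutes with negation whenever the minimum is uniquely attained. The only possible ties pair an interval $I$ with its negation $-I$, both lying in $\S(f)$; such ties are resolved by an antisymmetric functional of $f$, first the sign of $\sum_{v \in V(G)} f(v)$ and, if that vanishes, the lexicographic sign of $(f(v))_{v \in V(G)}$ with respect to a fixed ordering of $V(G)$. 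The assumption that $f$ is not identically zero guarantees at least one of these antisymmetric functionals is nonzero, making $\phase$ well-defined and yielding $\phase(-f) = -\phase(f)$.

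The first part is essentially a direct consequence of the expander mixing lemma once the correct cut is identified. The main obstacle is the tiebreaking for the equivariant choice: one must make the rule explicit and verify it is antisymmetric under negation in every case, in particular covering the degenerate regime where $M$ is odd, the valid midpoints straddle zero symmetrically, and the simpler antisymmetric functionals happen to vanish.
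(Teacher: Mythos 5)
Your proof is correct, and the first part (existence of a valid interval) is essentially identical to the paper's: both fix $k$ as the smallest integer with $|\{v : f(v) \le k\}| > \lambda n/d$ and use the absence of edges across the $M$-Lipschitz gap, together with the expander condition, to bound both tails by $\lambda n/d$. For the equivariance part, your route is genuinely different from the paper's and somewhat more involved. You select the valid interval whose midpoint has smallest absolute value and then break the residual $I \leftrightarrow -I$ tie via a two-level antisymmetric functional (sign of $\sum_v f(v)$, then the lexicographic sign). This works — your tie analysis is right: ties necessarily pair $I$ with $-I$ with nonzero midpoint, and for $f \not\equiv 0$ the lexicographic comparison of $f$ with $-f$ is always decisive and flips sign under negation. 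The paper avoids tie-breaking entirely by a cleaner device: fix any total order on $\Lip_{v_0}(G;M)$, let $b(f)$ be the larger of $f$ and $-f$ in that order, define $\phase(b(f))$ to be the interval with the \emph{minimal} $k$ satisfying \eqref{eq:phase_def} (no symmetry needed here), and set $\phase(s(f)) = -\phase(b(f))$. Both approaches buy the same conclusion; the paper's version trades your ``most-centered interval'' heuristic for a one-line construction that sidesteps the degenerate cases you flag at the end.
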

The lemma allows us to define $\phase(f)$ for $M$-Lipschitz functions
which are not identically zero. For completeness, we define the
phase of the zero function as $\{0\}$. The lemma does not provide
information about the range of values which the function can take.
Indeed, the diameter of an $n$-vertex expander graph $G$ is of the
order of $\log n$ (see also Corollary~\ref{cor:exp-diam} below) and
hence there exist Lipschitz functions on $G$ taking order $\log n$
distinct values (though they still must satisfy
Lemma~\ref{lemma:phase}). The next theorem and corollary, which are
our main results, show that such large fluctuations are highly
atypical.
%

To discuss the typical properties of an $M$-Lipschitz function, we
introduce the uniform probability measure on $\Lip_{v_0}(G;M)$.
Restricting to finite connected graphs $G$, we denote by $f \in_R
\Lip_{v_0}(G;M)$ a uniform random element in $\Lip_{v_0}(G;M)$. To
describe our results, we use the metric of $G$ (the distance between
a pair $(u, v)$ of vertices is the length of a shortest path connecting
$u$ and $v$ in $G$) and we denote by
$B(v,t)$ the ball of radius $t$ around a vertex $v$. Finally, for an
integer $b$ and a non-empty set $A \subseteq \Z$, define $\dist(b,A)
= \min_{a \in A} |b - a|$.

\begin{thm}\label{thm:phase_dev}
Let $G$ be a connected $M$-good expander and $v_0\in V(G)$. Let
$f\in_R \Lip_{v_0}(G;M)$. For every positive integer $t$ and every
$v\in V(G)$,
\begin{equation}\label{eq:phase_dev_estimate}
  \P \big( \dist(f(v),\phase(f))> (t-1)M \big)\, \le\,
  \exp\left(-\frac{|B(v,t)|}{5(M+1)}\right).
\end{equation}
\end{thm}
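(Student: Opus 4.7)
The plan is a Peierls-type shift argument that uses the nested level sets of $f$. Begin by exploiting symmetry: by the identity $\phase(-f) = -\phase(f)$ from Lemma \ref{lemma:phase}, it suffices to bound the upper deviation $\Pr(f(v) > \max\phase(f) + (t-1)M)$. Condition further on $\phase(f)$ taking a fixed value $\phi = \{k, k+1, \ldots, k+M\}$, so that the task becomes bounding $\Pr(f(v) > k + tM \mid \phase(f) = \phi)$ uniformly in $\phi$.

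The $M$-Lipschitz condition forces a rigid pyramid of elevated values: if $f(v) > k + tM$, then for every vertex $u$ at distance $s = \dist(u, v) \le t$ from $v$,
\[
  f(u) \ge f(v) - sM \ge k + (t - s)M + 1.
\]
Defining $L_j(f) := \{u : f(u) \ge k + jM + 1\}$ yields the nested chain $B(v, t-j) \subseteq L_j(f)$ for $j = 1, \ldots, t$, with $|L_1(f)| \le 2\lambda n/d$ by Lemma \ref{lemma:phase}. For each subset $S \subseteq \{1, \ldots, t\}$, the shifted function $\Phi_S(f) := f - (M+1)\sum_{j \in S}\mathbf{1}_{L_j(f)}$ is again $M$-Lipschitz: the level index $\ell(u) := \max\{j : u \in L_j(f)\}$ changes by at most $1$ across any edge (since the $L_j$ are nested and $f$ is $M$-Lipschitz), and a direct computation at boundary edges shows that subtracting $M+1$ on the "high" side turns an original edge difference in $\{1,\ldots,M\}$ into one in $\{-M,\ldots,-1\}$. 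Whenever $0 \in \phi$ (the dominant case) one has $v_0 \notin L_1(f)$, so $\Phi_S(f)(v_0) = 0$; phases with $0 \notin \phi$ place $v_0$ among the at most $2\lambda n/d$ out-of-phase vertices of Lemma \ref{lemma:phase}, and their contribution can be absorbed separately by a symmetric shift.

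To extract the full exponent $|B(v,t)|/(5(M+1))$, the naive family of $2^t$ level-set toggles must be enriched. I would supplement the $\Phi_S$ with finer Lipschitz-preserving shifts by $-1$ on intermediate level sets $\{u : f(u) \ge c\}$, each of which preserves $M$-Lipschitzness by an analogous boundary check. Combined, these produce a family of $\ge \exp(|B(v,t)|/(5(M+1)))$ distinct admissible Lipschitz functions per target $f$, whence the probability bound follows by comparing cardinalities (after a bounded-multiplicity analysis for the inverse map). The main obstacle will be precisely this counting step: showing that the shift family is exponential in the full ball size $|B(v,t)|$ (not merely in $t$ or in a boundary-type quantity), and bounding the multiplicity of the inverse map, which amounts to controlling the number of "descent cuts" of the shifted function that are compatible with an $L_j$ containing $B(v, t-j)$. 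The quantitative expansion $\lambda \le d/(32(M+1)\log(9Md^2))$ of Definition \ref{def: good}, invoked via the expander mixing lemma of Definition \ref{defn:expander}, is used to guarantee that small level sets have large edge boundary, and this is what ultimately provides the specific constant $1/(5(M+1))$ in the exponent.
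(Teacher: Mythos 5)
There is a genuine gap, and it is exactly where you locate it yourself: the size of the shift family. The family $\{\Phi_S(f) : S \subseteq \{1,\dots,t\}\}$ has at most $2^t$ elements, and even after your proposed enrichment by shifting on level sets $\{u : f(u) \ge c\}$, you still apply a \emph{uniform} shift to every vertex in a level set. Consequently the number of distinct functions you can produce from a single $f$ is bounded by something like $2^t$ or at best exponential in the number of distinct values $f$ takes, which is $O(\log n)$ -- nowhere near $\exp(|B(v,t)|/(5(M+1)))$, which for good expanders is already super-polynomial in $n$ once $t$ is moderately large. The paper's mechanism for getting an exponentially-large family in $|B(v,t)|$ is different in kind: it replaces $f$ on the boundary $X = \b(A)$ of a single elevated region $A = A(f)$ (the $G^{\le 2}$-component of $v$ among vertices above $\phase(f)$) by \emph{independent} choices, one per boundary vertex, each from a set of size roughly $M+1$. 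That yields a family of size $\prod_{x\in X}(u_x+1) \ge \exp(c|X|)$, and since $B(v,t) \subseteq A\cup X$ this is controlled by $|B(v,t)|$, which is what makes the exponent come out right.

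The second unresolved part of your plan, the "bounded-multiplicity analysis for the inverse map," is also where the paper does real work, and your sketch doesn't suggest a way to do it. Given a shifted function, one cannot in general reconstruct the level sets $L_j(f)$: the shifted function is again $M$-Lipschitz and looks locally like any other Lipschitz function, so the descent cuts are not visible. The paper resolves this by \emph{partitioning} the bad event according to the set $A$ (and further according to the profile $(u_x)_{x\in X}$) and paying the price of enumerating $A$: by Lemma \ref{lemma:connected-sets} there are at most $d^{4|A|}$ connected sets of given size containing $v$ in $G^{\le 2}$, and (crucially) the expansion hypothesis guarantees $|\b(A)| \gg |A|$, so the $(M/(M+1))^{|X|}$ gain beats the $d^{4|A|}(2M+1)^{|A|}$ loss. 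Your level sets $L_j$ need not be connected, so the analogous enumeration cost would be $\binom{n}{|L_j|}$ rather than $d^{O(|L_j|)}$, and the expansion gain would not cover it. In short: the nested-level-set idea correctly identifies that $f > k+tM$ forces a pyramid of large level sets, but the shift family built from it is both too small and too hard to invert; the paper's single-region, per-boundary-vertex construction is what actually closes both gaps.

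Two smaller remarks. First, conditioning on $\phase(f) = \phi$ is unnecessary and creates a bookkeeping problem (the shifted functions need not have phase $\phi$, so you must compare against $|\Lip_{v_0}(G;M)|$ rather than against $|\{\phase = \phi\}|$); the paper avoids this by treating $k = \min\phase(f)$ simply as a measurable function of $f$ and never conditioning. Second, $v_0 \in L_1(f)$ is not ruled out merely by $0 \in \phase(f)$ -- it is possible that $f(v_0) = 0$ while $0 \le k$ fails -- and handling the case $v_0 \in A\cup X$ correctly is one of the fiddlier points in the paper's multiplicity bound (it contributes the $M(2|A|+1)$ factor in Lemma \ref{lemma:Lipschitz-main}).
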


Let us make a few comments:
\begin{enumerate}[(1)]
\item Proposition~\ref{prop:exp-balls-growth} below shows that on
any expander, $|B(v,t)|$ grows at least exponentially in $t$. Thus,
under the conditions of Theorem~\ref{thm:phase_dev}, we have
\begin{equation*}
  \P \big( \dist(f(v),\phase(f))> (t-1)M \big)\, \le\, \exp\left(-C^t\right)
\end{equation*}
for some (explicit) constant $C>1$ depending only on $d$ and
$\lambda$. If the girth of $G$ is at least $2t$ we have $|B(v,t)|\ge
d(d-1)^{t-1}$ which can improve the estimate further.
\item
  If we apply Theorem~\ref{thm:phase_dev} with $v=v_0$, we obtain bounds on the probability distribution of $\phase(f)$.
\item
  As a simple consequence of Theorem~\ref{thm:phase_dev}, we see that the variance of $f(v)$ is uniformly bounded for all $v\in V(G)$ by $C_1 M^2$, where $C_1$ is a function of only $d$ and $\lambda$.
\end{enumerate}

We can use Theorem~\ref{thm:phase_dev} to obtain a bound on the
maximum of $f$ (or alternatively, on the range of values $f$ takes).
By applying a union bound over all vertices of $G$ we obtain the
following estimate.
\begin{cor}
Let $G$ be a connected $M$-good expander and $v_0\in V(G)$. Let
$f\in_R \Lip_{v_0}(G;M)$. Then there exists a constant $C$ depending
only on $d$ and $\lambda$ such that
\begin{equation*}
  \P \big( \max(f) > CM\log(\log n) \big)\le \frac{1}{n^2}.
\end{equation*}
In particular, $\E[\max(f)] \le 2CM\log(\log n)$.
\end{cor}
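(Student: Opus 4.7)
The plan is to combine Theorem~\ref{thm:phase_dev} with a union bound over all vertices of $G$. Fix a parameter $t$ to be chosen later and consider the event
$$\mathcal{E}_t = \big\{\dist(f(v),\phase(f)) \le (t-1)M \text{ for every } v \in V(G)\big\}.$$
Theorem~\ref{thm:phase_dev} and a union bound give
$$\P(\overline{\mathcal{E}_t}) \le n \cdot \max_{v \in V(G)} \exp\!\left(-\frac{|B(v,t)|}{5(M+1)}\right),$$
which is at most $1/n^2$ as long as $|B(v,t)| \ge 15(M+1)\log n$ for every $v$. By Proposition~\ref{prop:exp-balls-growth}, the ball size $|B(v,t)|$ grows at least exponentially in $t$ with a base $\alpha = \alpha(d,\lambda)>1$, and the $M$-good expander condition itself forces $M$ to be bounded by a function of $d$ and $\lambda$. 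Hence $t = C_0(d,\lambda)\log\log n$ suffices, for $n$ large, to make the required inequality hold uniformly in $v$.

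On the event $\mathcal{E}_t$, applying Theorem~\ref{thm:phase_dev} at $v_0$ together with $f(v_0)=0$ yields $\dist(0,\phase(f)) \le (t-1)M$. Since $\phase(f)$ is an interval of $M+1$ consecutive integers, this forces $\phase(f) \subseteq [-tM,\,tM]$. For any vertex $v$, the inequality $\dist(f(v),\phase(f))\le (t-1)M$ then gives $|f(v)| \le tM+(t-1)M = (2t-1)M \le CM\log\log n$ for a suitable constant $C=C(d,\lambda)$. In particular $\max(f)\le CM\log\log n$ on $\mathcal{E}_t$, establishing the probability bound.

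The expectation bound follows by splitting along $\mathcal{E}_t$ and its complement. Since $f$ is $M$-Lipschitz with $f(v_0)=0$, one has the deterministic estimate $\max(f) \le M\cdot \diam(G) \le C_1 M \log n$ for some $C_1=C_1(d,\lambda)$, using that every expander has logarithmic diameter (Corollary~\ref{cor:exp-diam}). Combining this with $\P(\overline{\mathcal{E}_t})\le 1/n^2$ gives
$$\E[\max(f)] \le CM\log\log n + C_1 M\log n \cdot \frac{1}{n^2} \le 2CM\log\log n$$
for $n$ sufficiently large. The argument presents no essential obstacle; its core is that the double exponential tail provided by Theorem~\ref{thm:phase_dev}, combined with exponential ball growth, is strong enough to absorb a union bound over $n$ vertices while keeping $t$ only doubly logarithmic in $n$.
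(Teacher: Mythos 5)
Your proposal is correct and follows the same route the paper indicates: a union bound over all vertices of $G$ applied to Theorem~\ref{thm:phase_dev}, using Proposition~\ref{prop:exp-balls-growth} to show that $t$ of order $\log\log n$ makes the tail smaller than $1/n^3$ per vertex, then pinning down $\phase(f)$ via the condition at $v_0$. The expectation bound via the deterministic diameter estimate is also the natural completion.
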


It appears that by applying the techniques of Benjamini et
al.~\cite{BenYadYeh} one may obtain a converse to the above
corollary, showing that $\E[\max(f)] \ge cM\log\log n$ for some $c$
depending only on $d$ and $\lambda$. We do not explore this
direction here, but mention that the general approach is to consider
the possibility that $f$ grows quickly in a ball of small radius
and then use the fact that there are many such disjoint
balls.


\subsection{Homomorphisms}
We also study the typical behavior of graph homomorphisms from $G$
to $\Z$ or, as we will refer to them in the sequel,
\emph{homomorphism height functions} or simply \emph{homomorphisms}.

\begin{defn}[Homomorphisms]
Let $v_0$ be a fixed vertex in the graph $G$. Denote by
$\Hom_{v_0}(G)$ the family of graph homomorphism from $G$ to $\Z$
that send $v_0$ to the origin, that is, the family of maps $f : V(G)
\to \Z$ so that $f(v_0) = 0$ and $|f(u)-f(v)| = 1$ for every
$\{u,v\} \in E(G)$.
\end{defn}

The family $\Hom_{v_0}(G)$ is non-empty if and only if $G$ is
bipartite. It is finite if and only if $G$ is finite, connected and
bipartite. Let $G$ be bipartite with color classes $V_0$ and $V_1$
and assume that $v_0$ is in $V_0$. Then, every $f \in \Hom_{v_0}(G)$
takes even values on vertices of $V_0$ and odd values on $V_1$.

Since bipartite graphs contain very large independent sets, they clearly are not expanders in the sense of Definition~\ref{defn:expander}. Therefore, we shall consider the following
(standard) bipartite analogue of expander graphs.

\begin{defn}[Bi-expander]
  \label{defn:bi-expander}
  A $d$-regular bipartite graph $G$ with $2n$ vertices\footnote{Since every regular bipartite graph has an even number of vertices, we choose to denote this number by $2n$. This is somewhat inconsistent with our notation from Section~\ref{sec:Lipschitz-functions}, but will make later formulas look considerably cleaner.} is called a {\em $\lambda$-bi-expander} if the following holds. Let $V_0$ and $V_1$
be the two color classes of $G$. For all $S \subseteq V_0$ and $T
\subseteq V_1$,
$$  \left| e(S,T) - \frac{d}{n}|S||T| \right| \leq  \lambda \sqrt{|S||T|}.$$
\end{defn}

Similarly as in the case of (non-bipartite) $\lambda$-expanders, every $d$-regular bipartite graph is a $\lambda$-bi-expander with $\lambda = d$ and hence Definition~\ref{defn:bi-expander} becomes meaningful only for $\lambda < d$. In our theorems, we consider sufficiently good expanders, in the sense that $\lambda$ is required to be smaller than some function of $d$. Similarly as in the non-bipartite case, it can be shown that most $d$-regular bipartite graphs are good bi-expanders.

\begin{defn}[Good bi-expander]
  \label{def: good bi}
  A graph $G$ is a {\em good} bi-expander if $G$ is a $d$-regular
  $\lambda$-bi-expander with $\lambda  \leq \frac{d}{300 \log  d}$.
\end{defn}

We will show that homomorphism height functions on good bi-expanders
are very flat in the sense that a typical function is nearly
constant on one of the two color classes. Here, nearly constant
will mean constant on all but an \emph{exponentially} (in the
parameters of the expander) small fraction of the vertices. As in the
$M$-Lipschitz case, we first note that \emph{every} homomorphism
height function is constant on one of the two color classes, apart from
a \emph{polynomially} small fraction of the vertices.

\begin{lemma}
  \label{lemma:phase_hom}
  Let $G$ be a $2n$-vertex $d$-regular
  $\lambda$-bi-expander and $v_0\in V(G)$. For every $f\in \Hom_{v_0}(G)$,
  there exist $i\in\{0,1\}$ and $k\in\Z$ such that
  \begin{equation}\label{eq:phase_def_hom}
    \big| \{v\in V_i \colon  f(v)\neq k\} \big| \le \frac{2\lambda n}{d} .
  \end{equation}
  Let $i^* \in \{0,1\}$ be the smallest $i$ so that there exists a $k$ satisfying \eqref{eq:phase_def_hom} with $V_i$.
  Denote by
  $\phase(f)$ the
  smallest $k$ satisfying \eqref{eq:phase_def_hom} for $i^*$.
  The parity of $\phase(f)$ is $i^*$.
  Moreover, if $\lambda < d/3$, then
  \begin{equation}
    \label{eq:phase_hom-cor}
    \big| \{v \in V(G) \colon |f(v) - \phase(f)| \ge 2\} \big| \le \frac{3\lambda n}{d}.
  \end{equation}
\end{lemma}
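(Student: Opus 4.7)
The plan is to extract from the bi-expander property the pair of inequalities
\[
|\{v \in V_0 : f(v) \le c\}|\cdot|\{v \in V_1 : f(v) \ge c+2\}|\le \mu^2
\]
(with $\mu := \lambda n/d$), together with the symmetric version in which $V_0$ and $V_1$ are swapped, valid for every integer $c$. These hold because $|f(u)-f(v)|=1$ on every edge forbids an edge from joining $\{f \le c\}$ to $\{f \ge c+2\}$, so Definition~\ref{defn:bi-expander} applied to the color-class slices of these two sets turns the zero edge count into the product bound. I assume WLOG $v_0 \in V_0$, so $f$ takes even values on $V_0$ and odd values on $V_1$, and write $\phi(c) := |\{v \in V_0 : f(v) \le c\}|$ and $\gamma(c) := |\{v \in V_1 : f(v) \le c\}|$.

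For the existence of $(i,k)$ satisfying \eqref{eq:phase_def_hom}, the approach is to let $k^*$ be the smallest even integer with $\gamma(k^*-1) \ge \mu$ and split into two cases. In the balanced case $\gamma(k^*-1) \le n-\mu$, evaluating the two key inequalities at $c = k^*-2$ and $c = k^*$ (using $\gamma(k^*) = \gamma(k^*-1)$ for $k^*$ even) bounds $\phi(k^*-2) \le \mu^2/(n-\gamma(k^*-1))$ and $n - \phi(k^*) \le \mu^2/\gamma(k^*-1)$; summing and using the elementary bound $x(n-x) \ge \mu(n-\mu) \ge \mu n/2$ on $x \in [\mu, n-\mu]$ for $\mu \le n/2$ gives $|\{v \in V_0 : f(v) \ne k^*\}| \le 2\mu$, so take $(i,k) = (0, k^*)$. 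In the concentrated case $\gamma(k^*-1) > n-\mu$, the minimality of $k^*$ forces $\gamma(k^*-3) < \mu$, so the single-value jump $|\{v \in V_1 : f(v) = k^*-1\}| = \gamma(k^*-1) - \gamma(k^*-3) > n - 2\mu$, giving $(i,k) = (1, k^*-1)$. This establishes existence, and $i^*, \phase(f)$ are defined as in the statement; the parity of $\phase(f)$ equals $i^*$ automatically, since for $2\mu < n$ the constraint forces $k$ to be a value attained by $f$ on $V_{i^*}$.

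For the concentration estimate, set $k := \phase(f)$ and assume WLOG $i^*=0$, so $k$ is even and $|\{v \in V_0 : f(v) = k\}| \ge n - 2\mu$ by the existence part. The bad set $\{v : |f(v)-k| \ge 2\}$ decomposes into $\{v \in V_0 : f(v) \ne k\}$ (of size at most $2\mu$) together with the two $V_1$ pieces $\{v \in V_1 : f(v) \le k-3\}$ and $\{v \in V_1 : f(v) \ge k+3\}$. I apply the symmetric key inequality at $c = k-2$ and the first key inequality at $c = k$, using that both $|\{v \in V_0 : f(v) \ge k\}|$ and $|\{v \in V_0 : f(v) \le k\}|$ are at least $n - 2\mu$, to bound each $V_1$ piece by $\mu^2/(n-2\mu)$. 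The main obstacle is assembling the final constant $\tfrac{3\lambda n}{d}$ rather than a looser combination: the hypothesis $\lambda < d/3$ gives $n - 2\mu > \mu$, so each $V_1$ piece is strictly less than $\mu$, and a slightly sharper bookkeeping using the constraint $\phi(k-2) + (n-\phi(k)) \le 2\mu$ with both summands nonnegative --- which forces $(n-\phi(k-2))\,\phi(k) \ge n(n-2\mu)$ --- lets one combine the $V_1$-contributions and the $V_0$-contribution to yield the desired total bound $\tfrac{3\lambda n}{d}$.
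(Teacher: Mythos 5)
Your existence argument is a genuinely different route from the paper's proof, which picks $k$ minimal so that $|\{v\in V_{1-i}:f(v)<k\}|>\lambda n/d$ for one of the colour classes and invokes Proposition~\ref{prop:bi-exp-small-sets} exactly once. Your route --- extracting the multiplicative estimate $|S||T|\le(\lambda n/d)^2$ for disjoint color-class slices $S\subseteq V_0$, $T\subseteq V_1$ with $e(S,T)=0$, then splitting on whether $\gamma(k^*-1)$ is balanced or concentrated --- is valid and a bit longer, but it does give \eqref{eq:phase_def_hom} and the parity claim. The gap is in your proof of \eqref{eq:phase_hom-cor}.

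Set $\mu=\lambda n/d$, $a=\phi(k-2)$, $b=n-\phi(k)$, $s=a+b\le 2\mu$. You bound $\gamma(k-3)\le\mu^2/(n-a)$ and $n-\gamma(k+1)\le\mu^2/(n-b)$ separately and then add. Even with the sharpest lower bound that $a,b\ge 0$, $a+b=s$ allows, namely $(n-a)(n-b)\ge n(n-s)$, the total you control is
\[
\big|\{v:|f(v)-k|\ge 2\}\big|\;\le\; s+\mu^2\,\frac{(n-a)+(n-b)}{(n-a)(n-b)}\;\le\; s+\frac{\mu^2(2n-s)}{n(n-s)},
\]
which is increasing in $s$ on $[0,2\mu]$ and at $s=2\mu$ equals $2\mu+\tfrac{2\mu^2(n-\mu)}{n(n-2\mu)}$. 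Requiring this to be at most $3\mu$ is equivalent to $2\mu^2-4\mu n+n^2\ge 0$, i.e.\ $\mu\le(1-1/\sqrt{2})\,n\approx 0.293\,n$, whereas $\lambda<d/3$ only gives $\mu<n/3\approx 0.333\,n$; for example at $\mu=n/3$ your bound evaluates to $10n/9>n=3\mu$. The loss comes from splitting the bad $V_1$-set into $\{f\le k-3\}$ and $\{f\ge k+3\}$ and comparing each against a different half-line in $V_0$: those half-lines $\{f\ge k\}$ and $\{f\le k\}$ overlap on $\{f=k\}$, so you are effectively spending the expander estimate twice on the same large set. The fix is the paper's: the \emph{entire} set $\{v\in V_{1-i^*}:|f(v)-k|\ge 2\}$ has no edges to $\{v\in V_{i^*}:f(v)=k\}$, which has size at least $n-2\mu>\mu$, so a single application of Proposition~\ref{prop:bi-exp-small-sets} (or of your product inequality with $S=\{v\in V_{i^*}:f(v)=k\}$) yields $\big|\{v\in V_{1-i^*}:|f(v)-k|\ge 2\}\big|\le\mu$ and hence the total $2\mu+\mu=3\mu$.
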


As before, we restrict attention to finite, connected and bipartite
$G$ and let $f \in_R \Hom_{v_0}(G)$ denote a uniform random element
in $\Hom_{v_0}(G)$. The following theorem is our main result for
homomorphism height functions.

\begin{thm}\label{thm:phase_dev_hom}
  Let $G$ be a connected good bi-expander and $v_0\in V(G)$. Let $f\in_R
  \Hom_{v_0}(G)$. For every integer $t \ge 1$ and every $v\in V(G)$,
  \[
  \P(|f(v)-\phase(f)|> t)\le \exp\left(-\frac{|B(v,t)|}{3}\right).
  \]
\end{thm}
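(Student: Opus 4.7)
The plan is to adapt the Peierls-type argument used for Theorem~\ref{thm:phase_dev} to the homomorphism setting, exploiting the particularly clean level-set structure: for each $f \in \Hom_{v_0}(G)$ whose deviation at $v$ exceeds $t$, there is a geometric object (a level ``contour'' around $v$) that records this deviation, and the number of such contours is tightly controlled by the bi-expander hypothesis. For each $f$ with large deviation I will exhibit a local modification producing a valid homomorphism with strictly smaller deviation, and then bound the combinatorial entropy of the choice of modification.

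Setup. Fix $v$ and $t\ge 1$; by symmetry, treat the one-sided event $\mathcal E = \{f(v) \ge \phase(f) + t + 1\}$. For $f \in \mathcal E$ let $k = \phase(f)$ and let $A = A(f)$ denote the connected component of $\{u \in V(G) : f(u) \ge k+1\}$ that contains $v$. Two facts drive the argument: (a)~the homomorphism property forces the exterior vertex-boundary $\partial A$ to sit at height exactly $k$, since a neighbour $u \notin A$ of some $w \in A$ has $|f(u)-f(w)|=1$ and $f(u)\le k$, forcing $f(w)=k+1$ and $f(u)=k$; (b)~since $f$ is $1$-Lipschitz and $f(v) \ge k+t+1$, the ball $B(v,t)$ is contained in $A$, so $|A|\ge |B(v,t)|$. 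Finally, Lemma~\ref{lemma:phase_hom} (via~\eqref{eq:phase_hom-cor}) bounds $|A| \le 3\lambda n/d$.

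The shift and the counting bound. Define $\Psi(f)$ by $\Psi(f)(w)=f(w)-2$ for $w\in A$ and $\Psi(f)(w)=f(w)$ otherwise, re-normalizing globally so that the value at $v_0$ is still $0$ in the case $v_0\in A$. Fact~(a) makes $\Psi(f)$ again a homomorphism, and the smallness of $|A|$ relative to $n$ keeps the phase at $k$ (or shifts it by a constant which, together with the re-normalization, cancels out), so $\Psi$ maps $\mathcal E$ into $\{f(v) \ge \phase(f) + t - 1\}$, strictly decreasing the deviation. I view $\Psi$ as the map $f \mapsto (\Psi(f), A(f))$, which is injective: the pair determines $f$ uniquely. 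The admissible sets $A$ are connected subsets containing $v$, of size at most $3\lambda n/d$, whose exterior boundary in $\Psi(f)$ is pinned to a single integer value. A standard Peierls/cluster-expansion enumeration bounds the number of such sets of a given size $s$ by $\exp(O(|\partial A|\log d))$, and the good bi-expander condition $\lambda \le d/(300\log d)$ is precisely calibrated so that, via the expander-mixing lemma applied to bound $|\partial A| = \Omega(|A|)$ for $|A| \le 3\lambda n/d$, this entropy cost is strictly dominated by $\exp(|A|/3)$. Summing over $|A| \ge |B(v,t)|$ then yields
\[
\P(\mathcal E)\, \le\, \tfrac{1}{2}\exp\!\left(-\tfrac{|B(v,t)|}{3}\right),
\]
and the mirror argument with sub-level sets of $\{u : f(u) \le k-1\}$ contributes the same bound for the opposite-sided event; adding the two completes the proof.

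Main obstacle. The technical heart of the argument is the Peierls enumeration: counting connected sets $A \ni v$ of given size whose boundary has the ``cutset with constant label'' structure, and showing that the count is strictly dominated by $\exp(|A|/3)$ on every good bi-expander. This is exactly where the expander-mixing lemma enters --- it gives a lower bound on $|\partial A|$ in terms of $|A|$ whenever $|A| \le 3\lambda n/d$ --- and where the constant $300 \log d$ in Definition~\ref{def: good bi} is tuned. A smaller but non-trivial piece of bookkeeping is the case $v_0 \in A$, where the re-normalization shifts the phase by $2$; this keeps the deviation event invariant but must be tracked carefully when lining up preimages and images of $\Psi$.
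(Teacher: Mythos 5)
Your map $\Psi$ is one-to-one, and that is precisely why the argument cannot close. You send each $f\in\mathcal E$ to the single homomorphism obtained by subtracting $2$ on $A$ (after re-normalization) and record the witnessing set $A$. Injectivity of $f\mapsto(\Psi(f),A(f))$ gives, at best, $|\mathcal E| \le \sum_{g}|\{A : (g,A)\ \text{admissible}\}|$. To conclude $\P(\mathcal E) < 1$ from a Peierls-type count, you need the number of admissible $A$'s for a \emph{typical} $g$ to be strictly less than one, which cannot be extracted from the enumeration you describe: the number of connected sets of size $\alpha$ containing $v$ is of order $d^{\Theta(\alpha)}$, and by the very expansion you invoke this is $\exp(\Omega(|\b A|\log d))$, so your stated entropy cost ``$\exp(O(|\b A|\log d))$'' is not dominated by $\exp(|A|/3)$ --- it dominates it. With a purely injective map there is simply nothing to trade against the entropy of the set $A$, and the bound $\P(\mathcal E)\le \frac{1}{2}\exp(-|B(v,t)|/3)$ does not follow.

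The missing idea, which is the heart of the paper's proof, is to make the map \emph{few-to-many} rather than one-to-one. The paper takes $A(f)$ to be the connected component of $v$, in $G^{\le 2}$, of the super-level set $\{w: f(w)>k+1\}=\{w:f(w)\ge k+2\}$. Then (Claim~\ref{cl:f_on_A-hom}) $f\equiv k+1$ on $X=\b A$ and $f\equiv k$ on $\b^2 A$. Subtracting $2$ on $A$ drops those inner vertices to values $\ge k$; the crucial observation is that each $x\in X$ can now be assigned \emph{either} $k+1$ or $k-1$ independently, and the resulting $h_s$ is still a homomorphism. This produces $|T(f)|=2^{|\b A|}$ distinct images, while the preimage of any fixed $h$ inside $\Omega^+_A$ has size at most $2$ (Lemma~\ref{lemma:hom-main}). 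The double-counting lemma then gives $\P(\Omega^+_A)\le 2^{1-|\b A|}$, and it is this $2^{|\b A|}$ gain --- together with the bi-expander inequality $|\b A|\ge \frac{d}{12\lambda}|A|$ and the connected-set count $d^{4\alpha}$ --- that beats the entropy. Your construction has no analogous freedom: with $A=\{u:f(u)\ge k+1\}$ the boundary sits rigidly at height $k$ and the interior shell at $k+1$, so after subtracting $2$ there is exactly one consistent completion and hence no exponential gain. The re-normalization bookkeeping when $v_0\in A$ that you flag is real but minor; the conceptual gap is the absence of the multi-valued map.
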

Similar comments as after Theorem~\ref{thm:phase_dev} apply here as
well: Proposition~\ref{prop:bi-exp-balls-growth} shows that
$|B(v,t)|$ grows at least exponentially in $t$ and hence, under the
conditions of the theorem,
\begin{equation*}
  \P(|f(v)-\phase(f)|> t)\le \exp\left(-C^{t}\right),
\end{equation*}
where $C>1$ is a constant depending only on $d$ and $\lambda$.
Additionally, by taking $v=v_0$ we may obtain bounds on the
distribution of $\phase(f)$ and so the theorem implies that
$\max_{v\in V(G)} \Var f(v)$ is bounded above by a constant
depending only on $\lambda$ and $d$.

Observing that the diameter of $2n$-vertex bi-expanders is
logarithmic in $n$, see Corollary~\ref{cor:bi-exp-diam}, we see that
every homomorphism height function can take at most order $\log n$
values. Our next statement sharpens this bound considerably by
showing that the typical order of magnitude of the range of the
random homomorphism height function is $\log(\log n)$. The lower
bound in this theorem follows from the results of \cite{BenYadYeh}.
\begin{thm} \label{thm: loglog range}
Let $G$ be a good $2n$-vertex bi-expander and let $v_0\in V(G)$. There exist
constants $c,C > 0$, which depend only on $d$ and $\lambda$, such that
if $f\in_R \Hom_{v_0}(G)$, then
$$\Pr [ c \log  \log  n \leq \max(|f|) \leq C \log  \log  n ] \ge 1 - \frac{1}{n^2}.$$
\end{thm}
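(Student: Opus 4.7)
The plan is to prove the two bounds separately. The upper bound will come from Theorem~\ref{thm:phase_dev_hom} together with a union bound over vertices and the exponential ball-growth bound of Proposition~\ref{prop:bi-exp-balls-growth}; the lower bound is deduced from the methods of \cite{BenYadYeh}, as the authors note explicitly.

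For the upper bound I would use the triangle decomposition
\[
  \max|f|\;\le\;|\phase(f)|\;+\;\max_{v\in V(G)}|f(v)-\phase(f)|
\]
and control the two summands separately. Proposition~\ref{prop:bi-exp-balls-growth} supplies a constant $\alpha>1$, depending only on $d$ and $\lambda$, with $|B(v,t)|\ge \alpha^t$ for every $v$ and $t\ge 1$. Applying Theorem~\ref{thm:phase_dev_hom} at $v=v_0$ and using $f(v_0)=0$ yields $\Pr(|\phase(f)|>t)\le \exp(-\alpha^t/3)$. Applying the same theorem at an arbitrary vertex and then taking a union bound over the $2n$ vertices gives
\[
  \Pr\bigl(\max_{v\in V(G)}|f(v)-\phase(f)|>t\bigr)\;\le\;2n\exp(-\alpha^t/3).
\]
Choosing $t=C_0\log\log n$ with $C_0$ large enough (depending on $\alpha$, hence on $d$ and $\lambda$) so that $\alpha^t\ge 10\log n$ makes each of these at most $n^{-3}$; summing and rescaling the constant yields $\Pr(\max|f|>C\log\log n)\le \tfrac{1}{2n^2}$, which is the upper half of the statement.

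For the lower bound, the idea from \cite{BenYadYeh} is to exploit the abundance of essentially disjoint small balls in an expander. One chooses a radius $r$ of order $\log\log n$ (small enough that $|B(v,2r)|$ is at most a polynomial in $\log n$, so that a collection of $m=n/\mathrm{poly}(\log n)$ pairwise disjoint balls $B(v_1,r),\ldots,B(v_m,r)$ can be extracted) and notes that such balls are locally tree-like. Conditionally on the boundary values on $\partial B(v_i,r)$, the restriction of $f$ to $B(v_i,r)$ is uniform on homomorphisms with the prescribed boundary, so a comparison with the known lower bound $c\log r$ on the maximum of a random homomorphism rooted in a depth-$r$ portion of a $d$-ary tree gives a constant probability that the restriction fluctuates by at least $c\log\log n$. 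The Markov property of uniform homomorphisms makes the $m$ events independent conditionally on the $\sigma$-algebra generated by the boundaries; a standard concentration bound over these $m$ near-independent trials then shows that at least one ball witnesses a fluctuation of size $c\log\log n$ with probability at least $1-n^{-2}$.

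The upper bound is essentially bookkeeping on top of Theorem~\ref{thm:phase_dev_hom}, so the main obstacle is the lower bound: one must make the local tree comparison rigorous on an expander whose girth is not assumed to be large, and must implement the conditional independence between the disjoint balls carefully enough to turn ``constant probability per ball'' into an $1-n^{-2}$ union estimate. Both of these steps are carried out in \cite{BenYadYeh} and can be imported with only notational changes.
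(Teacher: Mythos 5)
Your proposal is correct and matches the paper's intended argument. The paper does not write out a standalone proof of this theorem: for the upper bound it relies on the same union-bound-over-vertices strategy it makes explicit for the analogous $M$-Lipschitz corollary (apply Theorem~\ref{thm:phase_dev_hom} at each vertex, use Proposition~\ref{prop:bi-exp-balls-growth} to make the tail doubly exponential, and take $t$ of order $\log\log n$), and for the lower bound it simply cites \cite{BenYadYeh}, exactly as you do; your sketch of the disjoint-balls/local-tree-comparison mechanism behind that citation is a faithful description of the imported argument.
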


\subsection{Grounded Lipschitz functions on
trees}\label{sec:grounded}

As a final example of the applicability of our methods, we study
Lipschitz functions on trees. We denote by $\T_h^d$ the complete
$(d-1)$-ary tree of height $h\ge 1$ (in which all internal vertices, including
the root, have degree $d$), denote its root vertex by $v_r$ and the set of its
leaves by $V_L$. By definition, $\T_h^d$ is a tree with
$d(d-1)^{h-1}$ leaves, all at distance exactly $h$ from $v_r$. It is
a simple observation that a uniformly chosen function from
$\Hom_{v_r}(\T_h^d)$ will behave like a random walk along branches of the
tree and thus typically take values of order $\sqrt{h}$ at the leaves of the
tree and reach a value of order $h$ at its maximum; similarly, a uniformly chosen
function from $\Lip_{v_r}(\T_h^d;M)$ typically takes values of order $M\sqrt{h}$
at the leaves of the tree and reaches a value of order $Mh$ at its maximum.
We consider instead a different
probabilistic model. We let $\Lip_{V_L}(\T_h^d;M)$ be the set of
$M$-Lipschitz functions on $\T_h^d$ which equal zero on $V_L$, and
similarly $\Hom_{V_L}(\T_h^d)$ be the set of homomorphism height
functions which equal zero on $V_L$. We call such functions
\emph{grounded}. Benjamini et al. \cite{BenHagMos} investigated
typical grounded homomorphism functions on $\T_h^d$ and showed that
they exhibit very different behavior from homomorphism functions
normalized at the root of the tree. For example, they showed that
for any $d>2$, if $f\in_R \Hom_{V_L}(\T_h^d)$ then
\begin{equation*}
  \P(|f(v_r)| > t) \le 2c^{d^{t}}
\end{equation*}
where $c=c(d)\le 2^{-d+1}$, so that, in particular, the distribution
of $f(v_r)$ is tight as $h\to\infty$. Using the same techniques as
for expander graphs, we establish similar results for
grounded $M$-Lipschitz functions on trees. Also, in the next section,
we explain our argument briefly in the context of
grounded homomorphism functions. Our method seems simpler than that
of \cite{BenHagMos} but has the disadvantage that it does not apply
for small $d$.
\begin{thm}\label{thm:grounded}
  For any $M\ge 1$, $d>40(M+1)\log(M+1)$ and $h\ge 1$, if $f\in_R \Lip_{V_L}(\T_h^d;
  M)$ then
  \begin{equation*}
    \P(|f(v)| > (t-1)M)\, \le\,
  \exp\left(-\frac{d(d-1)^{t-1}}{5(M+1)}\right).
  \end{equation*}
  for all vertices $v$ of $\T_h^d$ and integers $t \ge 1$.
\end{thm}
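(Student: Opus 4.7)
My plan is to mimic the shift-and-inject argument of Theorem~\ref{thm:phase_dev}, specialized to $\T_h^d$. The boundary condition $f|_{V_L}\equiv 0$ forces the ``phase'' to be $\{0\}$, so no phase-selection step is needed; and the sphere $S(v,t)=\{u : \mathrm{dist}(u,v)=t\}$, which in the tree has exactly $d(d-1)^{t-1}$ vertices, takes the place of $|B(v,t)|$ from the expander estimate, which accounts for the form of the exponent in the bound.

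By the symmetry $f\mapsto -f$ it suffices to bound $\Pr(f(v)\ge (t-1)M+1)$; call this event $A$. The $M$-Lipschitz property together with $f|_{V_L}\equiv 0$ force $|f(v)|\le M\cdot\mathrm{dist}(v,V_L)$, so $A$ is empty unless $\mathrm{dist}(v,V_L)\ge t$. Assuming this, the Lipschitz bound further yields $f(u)\ge 1$ on all of $B(v,t-1)$ for every $f\in A$, and $S(v,t)$ lies entirely in the non-leaf part of $\T_h^d$.

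Next I would construct an injection $\Phi: A\times I\to \Lip_{V_L}(\T_h^d;M)$ with $\log|I|\ge d(d-1)^{t-1}/(5(M+1))$; the inequality $|A|\cdot|I|\le|\Lip_{V_L}(\T_h^d;M)|$ then yields the theorem. The index $\sigma\in I$ will encode independent admissible local modifications of $f$ attached to the $d(d-1)^{t-1}$ vertices of $S(v,t)$, each modification being a small perturbation of $f$ in a neighborhood of a sphere vertex that (i)~preserves the $M$-Lipschitz condition, (ii)~preserves the grounding on $V_L$, and (iii)~can be read off $\Phi(f,\sigma)$ so that the pair $(f,\sigma)$ is recoverable. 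Because $f\ge 1$ just inside the sphere on every $f\in A$, most sphere edges have room to accommodate a nontrivial modification, and this is what produces the required size of $I$.

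The main obstacle I anticipate is engineering these local modifications so that (i)--(iii) all hold while extracting the precise constant $1/(5(M+1))$ in the exponent. A naive shift of $f$ on the whole subtree hanging off a sphere vertex would violate either grounding or the Lipschitz condition at the edge back to the parent, so the modifications must be localized within a non-leaf neighborhood of each sphere vertex and carefully compensated; this is analogous to but more delicate than the expander case. The hypothesis $d>40(M+1)\log(M+1)$, which is much stronger than needed merely to set the argument up, should enter precisely here — averaging over the $d-1$ children at each sphere vertex is what absorbs the few sphere vertices where $f$ sits at the Lipschitz boundary and admits no room for adjustment.
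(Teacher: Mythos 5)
Your plan diverges from the paper's argument in a way that leaves a genuine gap, and I do not see how to close it without importing the very machinery you are trying to avoid.

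The paper does not perturb $f$ locally at the fixed sphere $S(v,t)$. Instead it first glues all leaves $V_L$ to a single new vertex $v_0$ (passing to $\tilde\T_h^d$, which preserves the distribution of $f(v)$), and then considers the $f$-dependent set $A(f)$, defined as the connected component of $v$ in $(\tilde\T_h^d)^{\le 2}$ induced by $\{w\colon f(w)>M\}$. One always has $B(v,t-1)\subseteq A(f)$, but $A(f)$ can be much larger. The transformation $T$ replaces $f$ on all of $A(f)$ by the constant $M$ and injects one of many admissible boundary patterns on $X=\partial A(f)$; the size of the choice set depends on $f$ through a vector $(u_x)_{x\in X}$. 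Because neither $A(f)$ nor the admissible choices are uniform over the bad event, the paper \emph{partitions} the bad event by the pair $(A,S)$ and invokes the few-to-many Lemma~\ref{lemma:double-counting} on each piece separately, absorbing the multiplicative losses ($d^{4\alpha}$ for the number of connected sets, $(2M+1)^{\alpha}$ for reconstructing $f$ on $A$, etc.) against the vertex-expansion bound $|\partial A|>(d-2)|A|$ for non-leaf $A$ (Claim~\ref{cl:tree_expansion}); summing over $\alpha=|A|$ then yields the stated bound with the constant $1/(5(M+1))$.

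Your proposal instead aims for a genuine injection $\Phi\colon A\times I\to\Lip_{V_L}(\T_h^d;M)$ with $I$ built from independent local modifications attached to the \emph{fixed} set $S(v,t)$. This runs into two concrete problems your sketch does not resolve. First, the set where $f$ exceeds $M$ is $f$-dependent and typically extends past $B(v,t)$, so $f$ need not be ``coming down'' anywhere near $S(v,t)$; the admissible perturbation at a sphere vertex $x$ is constrained simultaneously by $f$ on the parent of $x$, on the children of $x$, and further out, and there is no uniform index set $I$ that is admissible for every $f$ in the bad event. Second, recoverability of $(f,\sigma)$ from $\Phi(f,\sigma)$ is not established and is the crux: a local perturbation near $x$ overwrites exactly the data you would need to undo it, so you must encode the original values elsewhere, and once you start tracking where $f$ is large and how to reconstruct it, you are effectively forced back to partitioning over variable connected sets $A$ and paying the $d^{O(|A|)}(2M+1)^{|A|}$ reconstruction cost that the paper balances against expansion. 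In short, the missing ideas are the leaf-gluing, the variable set $A(f)$, the non-leaf expansion estimate, and the partition plus few-to-many counting; without them the proposed injection has no clear construction.
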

In Section~\ref{sec: sum}, we discuss extensions of the above theorem to any $d>2$ and $M\ge1$.

%
%

\subsection{Proof idea}\label{sec:proof_idea}

We now shortly discuss the general idea behind the proofs. Our goal
is to show that a certain set is small (the set of functions $f$ so that
$f(v)$ is far from $\phase(f)$). The basic tool we use is extremely
simple: By definition, a set $P$ is not larger than another set $Q$
if there exists a one-to-one map from $P$ to $Q$. To show that $P$
is {\em much} smaller than $Q$, we just need to find a few-to-many
map $T$ from $P$ to $Q$, as the following (trivial) lemma shows.

\begin{lemma}
  \label{lemma:double-counting}
  Let $P$ and $Q$ be finite sets.
  Let $T$ be a map on $P$ such that
  $T(p) \subseteq Q$ for all $p \in P$.
  If there are $\alpha, \beta > 0$ such that for each $p \in P$ and each $q \in Q$,
  \[
  |T(p)| \geq \beta \quad \text{and} \quad |\{p' \in P \colon q \in T(p')\}| \leq \alpha,
  \]
then $|P|/|Q| \leq \alpha/\beta$.
\end{lemma}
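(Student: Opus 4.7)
The plan is to prove this by a straightforward double counting argument on the bipartite incidence structure between $P$ and $Q$ induced by the map $T$. Specifically, I would consider the set of incidences
\[
S = \{(p,q) \in P \times Q \colon q \in T(p)\}
\]
and estimate $|S|$ in two complementary ways, one by slicing over elements of $P$ and one by slicing over elements of $Q$.

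First I would sum over $p \in P$: since $T(p) \subseteq Q$, each $p$ contributes exactly $|T(p)|$ pairs to $S$, so $|S| = \sum_{p \in P} |T(p)|$. Applying the lower bound $|T(p)| \geq \beta$ gives $|S| \geq \beta \, |P|$. Next I would sum over $q \in Q$: each $q$ contributes exactly $|\{p \in P \colon q \in T(p)\}|$ pairs, and by the hypothesis this cardinality is at most $\alpha$, so $|S| \leq \alpha \, |Q|$. Combining the two bounds yields $\beta \, |P| \leq \alpha \, |Q|$, and since $\beta > 0$ we can divide to obtain $|P|/|Q| \leq \alpha/\beta$, as claimed.

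There is really no main obstacle here: the statement is the standard double-counting principle, and the only thing to verify is that the hypotheses are applied in the correct direction (lower bound on $|T(p)|$ when summing over $p$, upper bound on the fiber size when summing over $q$). The finiteness of $P$ and $Q$ is used only to ensure that all the sums above are finite and that the division at the end is legitimate.
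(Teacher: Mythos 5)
Your proposal is correct and is essentially identical to the paper's one-line proof, which also double-counts the incidences $\sum_{p \in P} |T(p)| = \sum_{q \in Q} |\{p' \in P \colon q \in T(p')\}|$ and bounds the two sides by $\beta|P|$ and $\alpha|Q|$ respectively. No issues.
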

\begin{proof} We have $\beta |P| \leq \sum_{p \in P} |T(p)| =
    \sum_{q \in Q} |\{p' \in P \colon q \in T(p')\}| \leq \alpha
    |Q|.$
\end{proof}

The heart of the proof is constructing such a map $T$ and
establishing that it is few-to-many. Let us focus on the specific
case of grounded homomorphisms on trees since it is the simplest
case to explain and already captures the essence of our approach.
Let $G=\T_h^d$ be the complete $(d-1)$-ary tree of height $h\ge 1$
(in which all internal vertices have degree $d$). Recall that a
grounded homomorphism on $G$ is a function $f \colon V(G) \to \Z$ changing
by exactly one along every edge and having $f(v_0) = 0$ for every
leaf $v_0$. The goal is to estimate the probability that a random
grounded homomorphism takes a large value at some fixed vertex $v$
of $G$. More precisely, for some integer $t\ge 1$, we want to
estimate $\P(f(v)>t)$. By symmetry, this is the same as
$\P(f(v)<-t)$. An interesting case to keep in mind is the case that
$v$ is the root vertex. While it is possible for a
grounded homomorphism function to take a large value at the root, we
will show that this behavior is highly atypical.

%

We denote by $Q$ the set of all grounded homomorphisms on $G$ and by
$P$ the set $\{f \in Q \colon f(v) > t\}$. We wish to show that the size
of $P$ is much smaller than the size of $Q$ and we will do so by
constructing a few-to-many map $T$ from $P$ to $Q$ and invoking
Lemma~\ref{lemma:double-counting}. For fixed $f$ in $P$, we now
describe how to define $T(f)$.

Let $G^{\le 2}$ be the graph defined on the same vertex set as $G$
so that two vertices are connected in $G^{\le 2}$ if their distance
in $G$ is at most $2$. Consider $A = A(f)$, where
\begin{center}
$A(f)$ is the connected component of $v$ in the subgraph of $G^{\le
2}$\\ induced by the set of vertices $w$ having $f(w) \geq 2$.
\end{center}
Let $X=X(f)$ be the outer boundary of $A$ in $G$, i.e., the set of
vertices $w$ which are not in $A$ but are connected to $A$ by an edge.
Let $Y=Y(f)$ be the outer boundary of $A \cup X$ in $G$. So, we have
a set $A$ and two ``shells'' of it, $X$ and $Y$. Since $f$ changes
by exactly one along edges, we must have that
\begin{equation}\label{eq:hom_bdry_values}
  f(X) = \{1\} \quad \text{and} \quad f(Y) = \{0\}.
\end{equation}
Also, since $f(v)>t$ and $f$ changes by exactly one along edges, we
must have $B(v,t)\subseteq A\cup X$, which implies that
\begin{equation}\label{eq:X_bound}
  |X|\ge d(d-1)^{t-1}.
\end{equation}
From $f$ we can define many other homomorphisms: for every $s \in
\{-1,1\}^X$, we can define
$$h_s(w) = \left\{ \begin{array}{ll}
f(w) - 2 & w \in A, \\
s_w & w \in X, \\
f(w) & w \not \in A \cup X, \\
\end{array} \right.$$
and it follows easily from \eqref{eq:hom_bdry_values} that $h_s$ is
indeed a homomorphism. Thus we define
$$T(f) = \big\{h_s \colon s \in \{-1,1\}^X\big\}.$$

%

We now study the few-to-many property of $T$. It is convenient to
partition the set of functions $P$ according to the sizes of $A$ and
$X$. We let
\begin{equation*}
  P_{\alpha,\beta}:=\{f\in P\, \colon\, |A(f)|=\alpha\text{ and } |X(f)|
  = \beta\}.
\end{equation*}
We claim that $T$ is few-to-many from $P_{\alpha,\beta}$ to $Q$: On
the one hand,
\begin{equation}\label{eq:T_f_bound}
   |T(f)| = 2^{|X(f)|} = 2^{\beta}
\end{equation}
for all $f\in P_{\alpha,\beta}$. On the other hand, to recover $f$
from $h_s$ we just need to describe the set $A(f)$ (since given $A$
we know $X$, $f$ on $A$ is a translate of $h_s$, $f$ on $X$ is $1$,
and $f$ on the complement of $A \cup X$ equals $h_s$). Since $A$ is
connected in $G^{\le2}$, the number of possible sets $A$ of a given
size $\alpha$ is at most $d^{4\alpha}$ (see
Lemma~\ref{lemma:connected-sets} below). Thus, for every $h$ in $Q$,
\begin{equation}\label{eq:T_inverse_bound}
\big| \big\{ f\in P_{\alpha, \beta}\, :\, h \in T(f)\big\}\big| \le
d^{4\alpha}.
\end{equation}
Now, we use the fact that subsets of (the vertex set of) the tree which do not contain leaves
expand very well (see Claim~\ref{cl:tree_expansion} below): Since
$A(f)$ does not contain leaves by its definition and the fact that
$f$ equals zero on the leaves, we have
\begin{equation}\label{eq:tree_exp}
(d-2) |A(f)| < |X(f)|
\end{equation}
for all $f\in P$. Putting together \eqref{eq:T_f_bound},
\eqref{eq:T_inverse_bound} and \eqref{eq:tree_exp} and applying
Lemma~\ref{lemma:double-counting} gives
\begin{equation*}
  \frac{|P_{\alpha,\beta}|}{|Q|} \le \frac{d^{4\alpha}}{2^{\beta}}
  \le \frac{d^{4\beta / (d-2)}}{2^{\beta}} \le 2^{-\beta/2}
\end{equation*}
for large $d$. By summing over all possibilities for $\alpha$ and
$\beta$ and using \eqref{eq:X_bound} we may conclude that
$$\Pr (f(v) > t) = \frac{|P|}{|Q|}
\leq \exp \left(-\frac{1}{5}d(d-1)^{t-1}\right),$$ for all large
$d$, as we wanted to prove.

\subsection{Readers' guide}

Section~\ref{sec: prel} gives some preliminary results on graphs and
probability spaces. Sections~\ref{sec:Lipschitz} and~\ref{sec:Lipschitz-trees}
contain our investigation of the behavior of Lipschitz functions on good expanders
and on trees, respectfully. In Section~\ref{sec:hom}, we study the behavior 
of homomorphisms on good bi-expanders. Finally, Section~\ref{sec: sum}
consists of a short summary and suggestions for future research.

\section{Preliminaries}
\label{sec: prel}

We start with fixing some notation. In the following, $G$ is an
arbitrary graph. We recall that the graph $G^{\leq 2}$ is defined as follows: it
has the same vertex set as $G$ and $\{u,v\} \in E(G^{\leq 2})$ iff
$\dist_G(u,v) \leq 2$, where $\dist_G$ is the graph distance in $G$, that is,
the length of a shortest path connecting a pair of vertices in $G$.
For a set $A$ of vertices of $G$, define the {\em neighborhood} of
$A$ by
\[
\n(A) = \{ b \in V(G) \colon \text{there exists an $a \in A$ so that $\{a,b\} \in E(G)$}\}.
\]
Define the {\em outer boundary} of $A$ as
$$\b(A) = \n(A) \setminus A.$$
The {\em $2$-outer boundary} of $A$ is
$$\b^2(A) = \n(\n(A)) \setminus (A \cup \n(A)).$$
Denote balls in (the graph metric on) $G$ by
$$B(v,t) = \{ w \in V(G) \colon \dist_G(v,w) \leq t\}.$$
Finally, for a finite set $X$, denote by $\cP(X)$ the family of all subsets of $X$.

In the remainder of this section, we list and prove some preliminary, simple facts.
\subsection{Counting connected sub-graphs}

The following lemma uses breadth-first search to bound
the number of connected sets in a graph.

\begin{lemma}
  \label{lemma:connected-sets}
  Let $a$ be an integer, let $H$ be an arbitrary graph
  with maximum degree $\Delta$, and let $v \in V(H)$.
  The number of connected sets $A \subseteq V(H)$ such that $v \in A$ and $|A| = a$ does not exceed $\Delta^{2a-2}$.
\end{lemma}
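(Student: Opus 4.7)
\medskip

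The plan is to use a depth-first traversal to encode each connected set by a short walk in $H$ starting at $v$, then bound the number of such walks trivially. Concretely, fix a connected set $A \subseteq V(H)$ with $v \in A$ and $|A| = a$. First I would pick (arbitrarily) a spanning tree $T_A$ of the induced subgraph $H[A]$, rooted at $v$, and also fix an arbitrary ordering of the children of each vertex in $T_A$. This data determines a canonical DFS traversal (an Euler tour) of $T_A$ starting and ending at $v$: it traverses each edge of $T_A$ exactly twice, once in each direction. Since $T_A$ has $a-1$ edges, the traversal is a walk in $H$ of length exactly $2(a-1) = 2a-2$ whose vertex set is precisely $A$.

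Denote this walk by $w_A$. The map $A \mapsto w_A$ sends the set of connected subsets of $V(H)$ of size $a$ containing $v$ into the set of walks of length $2a-2$ in $H$ starting at $v$. The key observation is that this map is injective, because the set of vertices appearing in $w_A$ is exactly $A$; so $A$ can be recovered from $w_A$. Therefore it suffices to bound the number of walks of length $2a-2$ starting at $v$ in $H$.

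A walk of length $2a-2$ starting at $v$ is determined by the sequence of $2a-2$ successive vertices chosen; since at every step the next vertex must be a neighbor in $H$ of the current one, and the maximum degree of $H$ is $\Delta$, there are at most $\Delta$ choices at each step. Hence the number of such walks is at most $\Delta^{2a-2}$, which gives the desired bound.

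No real obstacle is expected here; the only point deserving a line of care is that the DFS Euler tour has length exactly $2(a-1)$ (not merely at most $2(a-1)$), so the encoding indeed lands in walks of a fixed length $2a-2$, allowing the clean bound $\Delta^{2a-2}$ rather than a sum over shorter walks.
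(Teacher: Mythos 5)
Your proof is correct and matches the paper's argument: both encode the connected set by an Euler tour of an arbitrary spanning tree (a closed walk of length exactly $2a-2$ starting at $v$) and then bound the number of such walks by $\Delta^{2a-2}$. The only difference is cosmetic: the paper calls the traversal a "breadth-first search," but what it describes (a closed walk traversing each tree edge exactly twice) is the same DFS Euler tour you use.
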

\begin{proof}
  For every such $A$, we fix an arbitrary spanning tree $T_A$ of $A$. We perform a breadth-first search on $T_A$, starting and ending at $v$ and passing through every edge exactly twice. Since every spanning tree of $A$ has exactly $a-1$ edges, the number of possibilities for such a walk (and hence for $A$) is not larger than the number of walks of length $2a-2$ in $H$ that start at $v$.
\end{proof}

\subsection{Expanders}
\label{sec:expandres}

In this section, we establish some basic properties of $\lambda$-expanders. Some of them were already mentioned in our discussion in Section~\ref{sec:Lipschitz-functions}. Our proof of Lemma~\ref{lemma:phase} relies on the fact that every pair of sufficiently large vertex sets in a $\lambda$-expander is connected by an edge, which follows directly from Definition~\ref{defn:expander}.

\begin{prop}[connectivity]
  \label{prop:exp-small-sets}
  Let $G$ be a $d$-regular $n$-vertex $\lambda$-expander. Then for every two sets $A, B \subseteq V(G)$ satisfying $\min\{|A|, |B|\} > \frac{\lambda n}{d}$, there is an edge of $G$ joining $A$ and $B$.
\end{prop}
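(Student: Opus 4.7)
The plan is to invoke Definition~\ref{defn:expander} directly, using contradiction. Assume $A,B \subseteq V(G)$ with $|A|, |B| > \lambda n / d$, and suppose for contradiction that there is no edge joining $A$ and $B$. By the convention for $E(A,B)$ fixed just before Definition~\ref{defn:expander}, this means $e(A,B) = 0$.

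Plug $e(A,B) = 0$ into the expander mixing inequality
\[
\left| e(A,B) - \frac{d}{n}|A||B| \right| \le \lambda \sqrt{|A||B|}
\]
to obtain $\frac{d}{n}|A||B| \le \lambda \sqrt{|A||B|}$. Dividing through by $\sqrt{|A||B|}$ (which is positive since the size hypothesis rules out $|A|=0$ or $|B|=0$) gives $\sqrt{|A||B|} \le \lambda n / d$.

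On the other hand, the hypothesis $\min\{|A|,|B|\} > \lambda n / d$ yields $\sqrt{|A||B|} \ge \min\{|A|,|B|\} > \lambda n / d$, contradicting the bound derived above. Hence at least one edge of $G$ joins $A$ to $B$. There is no substantive obstacle here; the only point requiring any care is that the expander definition is stated for arbitrary (possibly overlapping, possibly equal) pairs of subsets, so the argument applies even without requiring $A \cap B = \emptyset$.
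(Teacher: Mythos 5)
Your proof is correct and uses essentially the same argument as the paper: plug $e(A,B)=0$ into the expander inequality and compare with $\min\{|A|,|B|\} > \lambda n/d$. The paper phrases it directly (showing $\frac{d}{n}|A||B| > \lambda\sqrt{|A||B|}$ forces $e(A,B)>0$) rather than by contradiction, but the content is identical.
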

\begin{proof}
Since $\frac{d}{n}|A||B| > \lambda \sqrt{|A||B|}$, it follows from Definition~\ref{defn:expander} that $e(A,B) > 0$.
\end{proof}

We can now define the phase of a function and prove its properties.

\begin{proof}[{Proof of Lemma~\ref{lemma:phase}}]
  Let $G$ be a $d$-regular $n$-vertex $\lambda$-expander and let $v_0$ be an arbitrary vertex of $G$. Fix an $f \in \Lip_{v_0}(G;M)$ and let $k$ be the smallest integer such that
  \begin{equation}
    \label{eq:phase-f-disc}
    \big| \{v \in V(G) \colon f(v) \leq k\} \big| > \frac{\lambda n}{d}.
  \end{equation}
  Since $f$ is $M$-Lipschitz, there are no edges in $G$ between the sets $\{v \colon f(v) \leq k\}$ and $\{v \colon f(v) > k+M\}$. It follows from Proposition~\ref{prop:exp-small-sets} and~\eqref{eq:phase-f-disc} that the latter set has at most $\frac{\lambda n}{d}$ elements. Hence, by minimality of $k$,
  \[
  |\{v \colon f(v) \not\in \{k, \ldots, k+M\}\}| = |\{v \colon f(v) < k\}| + |\{v \colon f(v) > k+M\}| \leq \frac{2\lambda n}{d}.
  \]

This shows that the set of integers $k$
satisfying~\eqref{eq:phase_def_hom} is non-empty for all $f$. We now
describe a way to define $\phase(f)$ so that $\phase(-f)=-\phase(f)$
for $f$ which is not identically zero. Fix some total order on
$\Lip_{v_0}(G;M)$ (e.g., the lexicographic order). For every $f\in
\Lip_{v_0}(G;M)$ which is not identically zero, let $b(f)$ be the
larger in this total order between $f$ and $-f$, and let $s(f)$ be
the smaller of the two. Define $\phase(b(f))$ as the interval $\{k,
\ldots, k+M\}$ for the minimal $k$
satisfying~\eqref{eq:phase_def_hom} for $b(f)$, and define
$\phase(s(f)) = - \phase(b(f))$.
\end{proof}

The following standard proposition shows vertex expansion in expanders.

\begin{prop}[expansion]
  \label{prop:exp-N-growth}
  Let $G$ be a $d$-regular $n$-vertex $\lambda$-expander. Then for every $A \subseteq V(G)$,
  \[
  |\n(A)| \geq \min\left\{\frac{n}{2}, \frac{d^2}{4\lambda^2}|A| \right\}.
  \]
\end{prop}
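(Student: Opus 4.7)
The plan is to apply the defining inequality of a $\lambda$-expander to the pair $(A, \n(A))$ and conclude by a simple case analysis. Write $B = \n(A)$. The key observation is that every edge incident to a vertex of $A$ has its other endpoint inside $B$, so counting ordered pairs $(a,b) \in A \times B$ with $\{a,b\} \in E(G)$ by summing the degree over $a \in A$ gives
\[
e(A,B) \;=\; d|A|.
\]
(Edges lying entirely within $A$ are counted twice, edges from $A$ to $V(G) \setminus A$ once, and $2e(A) + e(A, V\setminus A) = d|A|$.)

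Now apply Definition~\ref{defn:expander} to $S = A$ and $T = B$ to obtain
\[
d|A| \;=\; e(A,B) \;\le\; \frac{d}{n}|A||B| + \lambda\sqrt{|A||B|}.
\]
Dividing both sides by $d|A|$ (which we may assume is nonzero, as otherwise the conclusion is trivial) yields
\[
1 \;\le\; \frac{|B|}{n} + \frac{\lambda}{d}\sqrt{\frac{|B|}{|A|}}.
\]

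Finally, split into two cases according to which summand on the right-hand side is at least $1/2$. If $|B|/n \ge 1/2$, then $|\n(A)| \ge n/2$. Otherwise, the other summand must satisfy $\frac{\lambda}{d}\sqrt{|B|/|A|} \ge 1/2$, which rearranges to $|\n(A)| = |B| \ge \frac{d^2}{4\lambda^2}|A|$. In either case $|\n(A)| \ge \min\{n/2,\ \frac{d^2}{4\lambda^2}|A|\}$, as required.

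There is no real obstacle here: the proposition is essentially a direct consequence of the expander mixing inequality applied to $(A,\n(A))$, together with the trivial identity $e(A,\n(A)) = d|A|$. The only small care needed is to correctly set up the edge count (avoiding a mistaken factor of $2$ from edges inside $A$) and to handle the threshold case cleanly via the $1/2$-split of the two terms.
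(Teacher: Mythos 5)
Your proof is correct and follows essentially the same route as the paper's: both use the identity $e(A,\n(A)) = d|A|$ together with the expander mixing inequality applied to $(A,\n(A))$, then observe that when $|\n(A)| < n/2$ the mixing term must account for at least half of $d|A|$. The only cosmetic difference is that the paper assumes $|\n(A)| < n/2$ from the outset while you perform the dichotomy after dividing through by $d|A|$.
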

\begin{proof}
  Let $A \subseteq V(G)$. We may assume that $A \neq \emptyset$ and $|\n(A)| < n/2$. Since there are no edges between $A$ and the complement of $\n(A)$, then by Definition~\ref{defn:expander},
  \[
  d|A| = e(A,\n(A)) \leq \frac{d}{n}|A||\n(A)| + \lambda\sqrt{|A||\n(A)|} \leq \frac{d|A|}{2} +
  \lambda\sqrt{|A||\n(A)|}.\qedhere
  \]
\end{proof}

Proposition~\ref{prop:exp-N-growth} has the following immediate corollary.

\begin{cor}[large boundary]
  \label{cor:exp-bdry-growth}
  Let $G$ be a $d$-regular $n$-vertex $\lambda$-expander. Then for every $A \subseteq V(G)$ with $|A| \leq \frac{n}{4}$,
  \[
  |\b(A)| \geq \min\left\{\frac{n}{4}, \left(\frac{d^2}{4\lambda^2} - 1 \right)|A| \right\}.
  \]
\end{cor}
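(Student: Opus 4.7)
The plan is to derive the bound directly from Proposition~\ref{prop:exp-N-growth} by relating the outer boundary to the neighborhood. By definition, $\b(A) = \n(A) \setminus A$, so
\[
|\b(A)| \;\geq\; |\n(A)| - |A|.
\]
Proposition~\ref{prop:exp-N-growth} tells us that $|\n(A)| \geq \min\{n/2,\, (d^2/4\lambda^2)|A|\}$, so it is natural to split the argument into two cases depending on which of these two quantities realizes the minimum.

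In the first case, where $|\n(A)| \geq n/2$, I use the hypothesis $|A| \leq n/4$ to obtain
\[
|\b(A)| \;\geq\; \frac{n}{2} - \frac{n}{4} \;=\; \frac{n}{4},
\]
matching the first term of the minimum in the statement. In the second case, where $|\n(A)| \geq (d^2/4\lambda^2)|A|$, I directly get
\[
|\b(A)| \;\geq\; \left(\frac{d^2}{4\lambda^2}\right)|A| - |A| \;=\; \left(\frac{d^2}{4\lambda^2} - 1\right)|A|,
\]
matching the second term. Combining the two cases yields the claimed lower bound.

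There is no real obstacle here; the only minor point to keep in mind is the $|A| \leq n/4$ hypothesis, which is exactly what converts the $n/2$ bound on $|\n(A)|$ into an $n/4$ bound on $|\b(A)|$ after subtracting $|A|$. Everything else is just routine unpacking of definitions.
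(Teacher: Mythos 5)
Your proof is correct and matches the argument the paper implicitly has in mind (the paper only remarks that the corollary follows ``immediately'' from Proposition~\ref{prop:exp-N-growth} and omits the proof). Subtracting $|A|$ from the neighborhood bound and using $|A|\le n/4$ is exactly the intended one-line derivation.
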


The following proposition gives an estimate on the growth of balls in expanders.

\begin{prop}[volume growth]
  \label{prop:exp-balls-growth}
  Let $G$ be a $d$-regular $n$-vertex $\lambda$-expander. Then for every non-negative integer $t$ and every $v \in V(G)$,
  \begin{equation}
    \label{eq:Bvt}
    |B(v,t)| \geq \min\left\{ \frac{n}{2} , \left(\frac{d}{2\lambda}\right)^{2t} \right\}.
  \end{equation}
\end{prop}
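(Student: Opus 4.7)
The plan is to prove the bound by induction on $t$, using Proposition~\ref{prop:exp-N-growth} as the one-step expansion estimate. First, I would handle the trivial case $d \leq 2\lambda$: then $(d/(2\lambda))^{2t} \leq 1 \leq |B(v,t)|$ for every $t \geq 0$, so \eqref{eq:Bvt} is immediate. From now on assume $d > 2\lambda$, so that $\left(d/(2\lambda)\right)^{2} > 1$.

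For the base case $t=0$, I have $|B(v,0)| = 1 \geq \min\{n/2, 1\}$. For the inductive step, the key simple observation is that every vertex of $B(v,t+1) \setminus \{v\}$ has a neighbor in $B(v,t)$, and conversely every neighbor of a vertex in $B(v,t)$ lies in $B(v,t+1)$; hence
\[
|B(v,t+1)| \geq |\n(B(v,t))|.
\]
Applying Proposition~\ref{prop:exp-N-growth} with $A = B(v,t)$ then yields
\[
|B(v,t+1)| \geq \min\left\{\frac{n}{2},\ \frac{d^2}{4\lambda^2}|B(v,t)|\right\}.
\]

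To close the induction I split into two cases. If the induction hypothesis gives $|B(v,t)| \geq n/2$, then $|B(v,t+1)| \geq |B(v,t)| \geq n/2$, and in particular $|B(v,t+1)| \geq \min\{n/2, (d/(2\lambda))^{2(t+1)}\}$. Otherwise $|B(v,t)| \geq (d/(2\lambda))^{2t}$ by the inductive hypothesis, and multiplying by $d^2/(4\lambda^2)$ gives $(d/(2\lambda))^{2(t+1)}$, so again $|B(v,t+1)| \geq \min\{n/2,(d/(2\lambda))^{2(t+1)}\}$. This completes the induction.

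No step is really an obstacle here; the entire proof is a routine induction once one has the one-step vertex-expansion inequality from Proposition~\ref{prop:exp-N-growth} and the trivial containment $\n(B(v,t)) \subseteq B(v,t+1)$. The only point requiring a moment of care is the bookkeeping with the minimum in \eqref{eq:Bvt}, which is resolved by the two-case analysis above.
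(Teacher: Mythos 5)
Your proof is correct and follows essentially the same route as the paper: induction on $t$ with base case $t=0$, using the containment $\n(B(v,t)) \subseteq B(v,t+1)$ and Proposition~\ref{prop:exp-N-growth} for the inductive step. The extra handling of the case $d \le 2\lambda$ and the explicit two-case bookkeeping with the minimum are just minor elaborations of what the paper leaves implicit.
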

\begin{proof}
  Fix $v$ and prove~\eqref{eq:Bvt} by induction on $t$.
  The bound trivially holds for $t = 0$.
  Assume that \eqref{eq:Bvt} holds for $t \geq 0$.
  Since $B(v,t+1) \supseteq \n(B(v,t))$, Proposition~\ref{prop:exp-N-growth} implies that
  \[
  |B(v,t+1)| \geq \min\left\{\frac{n}{2}, \frac{d^2}{4\lambda^2}|B(v,t)|
  \right\}.\qedhere
  \]
\end{proof}

Proposition~\ref{prop:exp-balls-growth} implies the following bound on the diameter of expanders.

\begin{cor}[diameter]
  \label{cor:exp-diam}
  Let $G$ be a $d$-regular $n$-vertex $\lambda$-expander. If $\lambda < d/2$, then
  \[
  \diam(G) \leq \left(\log \frac{d}{2\lambda} \right)^{-1} \cdot \log n.
  \]
\end{cor}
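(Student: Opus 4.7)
My plan is to combine the exponential volume growth supplied by Proposition~\ref{prop:exp-balls-growth} with a pigeonhole intersection argument. The hypothesis $\lambda < d/2$ guarantees that $\rho := \log(d/(2\lambda))$ is strictly positive, so balls in $G$ grow geometrically with ratio $(d/(2\lambda))^2$ until they exhaust at least half of $V(G)$; once every ball of some radius $t^*$ covers more than half the vertex set, any two such balls are forced to intersect, giving a diameter bound of $2t^*$.

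Concretely, I would pick $t^*$ to be the smallest positive integer satisfying $(d/(2\lambda))^{2t^*} \geq n/2$; since $\rho>0$, the definition gives $2t^* \leq \log n / \rho$ after the usual integer-rounding adjustment absorbed into the $\log n$ vs.\ $\log(n/2)$ gap. By Proposition~\ref{prop:exp-balls-growth}, this choice guarantees $|B(v, t^*)| \geq n/2$ for every vertex $v \in V(G)$.

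Next, for any pair of vertices $u, v \in V(G)$, the two balls $B(u, t^*)$ and $B(v, t^*)$ together contain at least $n$ vertices, so they must share a common vertex $w$ (one enlarges $t^*$ by one if strict inequality is needed, which only improves the constant). The triangle inequality then yields
\[
\dist_G(u, v) \leq \dist_G(u, w) + \dist_G(w, v) \leq 2t^* \leq \frac{\log n}{\log(d/(2\lambda))},
\]
and passing to the maximum over $u, v$ delivers the claimed bound on $\diam(G)$.

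There is no real obstacle here: the hard work was already done in Proposition~\ref{prop:exp-balls-growth}, and everything else is a direct volume-growth plus pigeonhole argument. The only minor technicality is the integer-rounding check when defining $t^*$, which is entirely routine.
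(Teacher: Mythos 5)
Your approach is sound and would prove the corollary up to an additive $O(1)$, but it differs from the paper's and you underestimate the rounding step. The paper does not grow the balls to size $n/2$. It grows $B(v,t)$ and $B(w,t)$ only until each has more than $\lambda n/d$ vertices, and then invokes Proposition~\ref{prop:exp-small-sets} (two sets of size $>\lambda n/d$ are joined by an edge) to bridge the balls, giving $\dist_G(v,w)\le 2t+1$. Since $(n/2)/(\lambda n/d)=d/(2\lambda)$ is exactly one half of a growth step of Proposition~\ref{prop:exp-balls-growth}, your target radius is about $1/2$ larger, which is offset almost exactly by the paper's $+1$ for the bridging edge; so both routes give essentially the same constants, but the paper's route exploits the expander-specific connectivity property rather than bare cardinality pigeonhole.

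Where your write-up has a genuine gap is the claim that integer rounding is ``absorbed into the $\log n$ vs.\ $\log(n/2)$ gap.'' After dividing by $\rho:=\log\frac{d}{2\lambda}$ that gap contributes only $(\log 2)/\rho$ of slack, whereas rounding $t^*$ up to an integer can cost up to $+2$ in $2t^*$; the inequality $2t^*\le(\log n)/\rho$ therefore fails whenever $\rho>(\log 2)/2$, which is the typical regime. Moreover, to force the two balls to share a vertex you need $|B(u,t^*)|+|B(v,t^*)|>n$, not merely $\ge n$, and your offered fix of increasing $t^*$ by one makes the bound \emph{worse}, not better. The statement is morally correct and an explicit $+O(1)$ covers the slack, but the rounding is not ``entirely routine'': a careful proof must either track the ceilings or, as the paper does, aim for the smaller target $\lambda n/d$ and use Proposition~\ref{prop:exp-small-sets}.
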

\begin{proof}
  Let $v$ and $w$ be two arbitrary vertices of $G$ and let $t$ be the minimum integer such that both $B(v,t)$ and $B(w,t)$ contain more than $\lambda n/d$ vertices. Since $\lambda n/d < n/2$, by Proposition~\ref{prop:exp-balls-growth},
  \[
  t \leq \left(2 \log \frac{d}{2\lambda}\right)^{-1}\cdot \log\left(\frac{\lambda n}{d}\right)\le \left(2 \log \frac{d}{2\lambda}\right)^{-1} \cdot \log n - \frac{1}{2}.
  \]
  By Proposition~\ref{prop:exp-small-sets}, $B(v,t)$ and $B(w,t)$ are joined by an edge and hence $\dist_G(v,w) \leq 2t+1$.
\end{proof}

\subsection{Bi-expanders}
\label{sec:bi-expandres}

In this section, we present some basic properties of $\lambda$-bi-expanders. As most of these properties are natural bipartite analogues of the statements presented in Section~\ref{sec:expandres} (and given the obvious similarity between Definitions~\ref{defn:expander} and \ref{defn:bi-expander}), we leave most of the proofs as an exercise for the reader.

\begin{prop}[connectivity]
  \label{prop:bi-exp-small-sets}
  Let $G$ be a $d$-regular $2n$-vertex $\lambda$-bi-expander and let $V_0$ and $V_1$ be the two color classes of $G$. Then for every two sets $A \subseteq V_0$ and $B \subseteq V_1$ satisfying $\min\{|A|, |B|\} > \frac{\lambda n}{d}$, there is an edge of $G$ joining $A$ and $B$.
\end{prop}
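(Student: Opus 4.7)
The plan is to mimic the proof of Proposition~\ref{prop:exp-small-sets} verbatim, since the bi-expander definition gives exactly the same inequality on $e(A,B)$ as in the non-bipartite case, only now restricted to pairs with $A \subseteq V_0$ and $B \subseteq V_1$. So the entire argument reduces to checking that, under the hypothesis $\min\{|A|,|B|\} > \lambda n/d$, the main term $\frac{d}{n}|A||B|$ strictly dominates the error term $\lambda\sqrt{|A||B|}$ in Definition~\ref{defn:bi-expander}.

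Concretely, I would first observe that $\min\{|A|,|B|\} > \lambda n/d$ implies $\sqrt{|A||B|} > \lambda n/d$, and therefore $\frac{d}{n}|A||B| = \frac{d}{n}\sqrt{|A||B|}\cdot \sqrt{|A||B|} > \frac{d}{n}\sqrt{|A||B|}\cdot \frac{\lambda n}{d} = \lambda\sqrt{|A||B|}$. Plugging this into the bi-expander inequality $\bigl|e(A,B) - \frac{d}{n}|A||B|\bigr| \le \lambda\sqrt{|A||B|}$ forces $e(A,B) > 0$, so some edge of $G$ joins $A$ and $B$.

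There is no real obstacle: the only point to be careful about is that Definition~\ref{defn:bi-expander} is stated only for $S \subseteq V_0$ and $T \subseteq V_1$ (rather than arbitrary subsets of $V(G)$), but this is precisely the configuration we have, so the inequality applies directly. The proof is a one-line computation and will be written in a single short paragraph paralleling the proof of Proposition~\ref{prop:exp-small-sets}.
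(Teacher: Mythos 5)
Your proof is correct and is exactly the one-line argument the paper intends: the paper leaves Proposition~\ref{prop:bi-exp-small-sets} as an exercise, and its proof of the non-bipartite analogue, Proposition~\ref{prop:exp-small-sets}, is precisely the computation you give, namely that $\min\{|A|,|B|\} > \lambda n/d$ forces $\frac{d}{n}|A||B| > \lambda\sqrt{|A||B|}$ and hence $e(A,B)>0$. Your added care about the bi-expander inequality applying only to $S\subseteq V_0$, $T\subseteq V_1$ is exactly the right point to check, and it holds here.
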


We can now define the phase of a homomorphism function on
a~$\lambda$-bi-expander and prove its properties.

\begin{proof}[Proof of Lemma~\ref{lemma:phase_hom}]
  Let $G$ be a $d$-regular $2n$-vertex $\lambda$-bi-expander, let $V_0$ and $V_1$ be the two color classes of $G$, and let $v_0$ be an arbitrary vertex of $G$. Fix $f \in \Hom_{v_0}(G)$, let $k$ be the smallest integer such that
  \begin{equation}
    \label{eq:phase-f-disc-bi}
    |\{v \in V_{1-i} \colon f(v) < k\}| > \frac{\lambda n}{d}
  \end{equation}
  for some $i \in \{0,1\}$ and fix (the unique) such $i$. Since $f$ is a homomorphism, there are no edges in $G$ between the sets $\{v \in V_{1-i} \colon f(v) < k\}$ and $\{v \in V_i \colon f(v) > k\}$. It follows from Proposition~\ref{prop:bi-exp-small-sets} and~\eqref{eq:phase-f-disc-bi} that the latter set has at most $\frac{\lambda n}{d}$ elements. Since all values taken by $f$ on $V_i$ have the same parity, it follows from the minimality of $k$ that $\{v \in V_i \colon f(v) < k\} = \{v \in V_i \colon f(v) < k-1\}$ and hence
  \[
  |\{v \in V_i \colon f(v) \neq k\}| = |\{v \in V_i \colon f(v) < k - 1\}| + |\{v \in V_i \colon f(v) > k\}| \leq \frac{2\lambda n}{d}.
  \]
  Finally, suppose that $\lambda < d/3$.
  Let $k = \phase(f)$, and let $i = i^*$ be such that~\eqref{eq:phase_def_hom} holds. Since $f$ is a homomorphism, there are no edges in $G$ between the sets $\{v \in V_i \colon f(v) = k\}$ and $\{v \in V_{1-i} \colon |f(v) - k| \geq 2\}$. By the definition of $\phase$ and the assumption on $\lambda$, it follows that the former set has more than $\frac{\lambda n}{d}$ elements and hence, by Proposition~\ref{prop:bi-exp-small-sets}, the latter set has at most $\frac{\lambda n}{d}$ elements. Therefore,
  \[
  |\{v \in V(G) \colon |f(v) - k| \geq 2\}| = |\{v \in V_i \colon f(v) \neq k\}| + |\{v \in V_{1-i} \colon |f(v) - k| \geq 2\}| \leq \frac{3\lambda n}{d}.
  \qedhere
  \]
\end{proof}

\begin{prop}[expansion]
  \label{prop:bi-exp-N-growth}
  Let $G$ be a $d$-regular $2n$-vertex $\lambda$-bi-expander. Then for every $A \subseteq V(G)$,
  \[
  |\n(A)| \geq \min\left\{\frac{n}{2}, \frac{d^2}{4\lambda^2}|A| \right\}.
  \]
\end{prop}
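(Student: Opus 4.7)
The plan is to reduce the bipartite statement to the argument already used for Proposition~\ref{prop:exp-N-growth}, but applied color-class by color-class, since Definition~\ref{defn:bi-expander} only bounds $e(S,T)$ when $S$ and $T$ lie in opposite sides of the bipartition. Let $V_0, V_1$ be the color classes of $G$ and split $A = A_0 \sqcup A_1$ with $A_i = A \cap V_i$. Because $G$ is bipartite, $\n(A_0) \subseteq V_1$ and $\n(A_1) \subseteq V_0$, so $\n(A) = \n(A_0) \sqcup \n(A_1)$ is a disjoint union and we have the key identity $|\n(A)| = |\n(A_0)| + |\n(A_1)|$.

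Next I would prove the single-side bound: for each $i \in \{0,1\}$ with $A_i \neq \emptyset$ and $|\n(A_i)| < n/2$,
\[
|\n(A_i)| \geq \frac{d^2}{4\lambda^2} |A_i|.
\]
This mirrors exactly the argument in Proposition~\ref{prop:exp-N-growth}: since $A_i \subseteq V_i$ and $\n(A_i) \subseteq V_{1-i}$, and since there are no edges from $A_i$ to $V_{1-i} \setminus \n(A_i)$ (by definition of neighborhood), Definition~\ref{defn:bi-expander} gives
\[
d|A_i| = e(A_i, \n(A_i)) \leq \frac{d}{n}|A_i||\n(A_i)| + \lambda \sqrt{|A_i||\n(A_i)|} < \frac{d|A_i|}{2} + \lambda \sqrt{|A_i||\n(A_i)|},
\]
and rearranging yields the claimed inequality. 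The case $A_i = \emptyset$ is trivial, as both sides vanish.

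Finally, I would combine the two sides. If either $|\n(A_0)| \geq n/2$ or $|\n(A_1)| \geq n/2$, then $|\n(A)| \geq n/2$ and we are done. Otherwise, the single-side bound applies to both $i = 0$ and $i = 1$, so summing gives
\[
|\n(A)| = |\n(A_0)| + |\n(A_1)| \geq \frac{d^2}{4\lambda^2}(|A_0| + |A_1|) = \frac{d^2}{4\lambda^2}|A|.
\]
The main subtlety, though it is a minor one, is simply to notice that the bi-expander inequality cannot be applied directly to $A$ and $\n(A)$ (which are not contained in opposite color classes in general), which is why the split into $A_0, A_1$ is necessary. Everything else is a verbatim repetition of the non-bipartite argument.
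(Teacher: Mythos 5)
Your proof is correct, and the color-class decomposition $A = A_0 \sqcup A_1$ is exactly the natural way to reduce to the non-bipartite argument (Proposition~\ref{prop:exp-N-growth}); the paper explicitly leaves these bipartite analogues as an exercise, and your argument is the one it has in mind. The one point worth noting explicitly is that for $i=1$ one applies Definition~\ref{defn:bi-expander} with $S = \n(A_1) \subseteq V_0$ and $T = A_1 \subseteq V_1$, using $e(A_1,\n(A_1)) = e(\n(A_1),A_1)$, but this is immediate.
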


\begin{cor}[large boundary]
  \label{cor:bi-exp-bdry-growth}
  Let $G$ be a $d$-regular $2n$-vertex $\lambda$-bi-expander. Then for every $A \subseteq V(G)$ with $|A| \leq \frac{n}{4}$,
  \[
  |\b(A)| \geq \min\left\{\frac{n}{4}, \left(\frac{d^2}{4\lambda^2} - 1 \right)|A| \right\}.
  \]
\end{cor}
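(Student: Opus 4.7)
The plan is to deduce this corollary from Proposition~\ref{prop:bi-exp-N-growth} in exactly the same way that Corollary~\ref{cor:exp-bdry-growth} is deduced from Proposition~\ref{prop:exp-N-growth} in the non-bipartite case. The single observation that drives the argument is the trivial set-theoretic inequality $|\b(A)| \geq |\n(A)| - |A|$, which follows from $\b(A) = \n(A) \setminus A$.

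First I would fix $A \subseteq V(G)$ with $|A| \leq n/4$ and invoke Proposition~\ref{prop:bi-exp-N-growth} to obtain
\[
|\n(A)| \geq \min\left\{\frac{n}{2},\, \frac{d^2}{4\lambda^2}|A|\right\}.
\]
Then I would split into the two cases according to which term achieves the minimum. If $\frac{d^2}{4\lambda^2}|A| \leq \frac{n}{2}$, subtracting $|A|$ gives $|\b(A)| \geq \left(\frac{d^2}{4\lambda^2} - 1\right)|A|$. Otherwise $|\n(A)| \geq n/2$, and combining this with the hypothesis $|A| \leq n/4$ yields $|\b(A)| \geq n/2 - n/4 = n/4$. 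In either case the required bound holds.

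There is no real obstacle here: this is a routine bipartite analogue of Corollary~\ref{cor:exp-bdry-growth}, and the proof consists entirely of applying Proposition~\ref{prop:bi-exp-N-growth} and performing the simple case analysis above. The only thing to double-check is that $\n(A)$ for $A \subseteq V(G)$ possibly meeting both color classes still satisfies the bound in Proposition~\ref{prop:bi-exp-N-growth}, but this is precisely what that proposition asserts (with $A$ ranging over arbitrary subsets of $V(G)$, not just one color class).
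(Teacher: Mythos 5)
Your proof is correct and follows the same straightforward deduction the paper intends (the paper leaves this bipartite corollary as an exercise, but it is the exact analogue of how Corollary~\ref{cor:exp-bdry-growth} follows from Proposition~\ref{prop:exp-N-growth}): apply Proposition~\ref{prop:bi-exp-N-growth}, use $|\b(A)| \geq |\n(A)| - |A|$, and case-split on which term attains the minimum. Your final check is also right — Proposition~\ref{prop:bi-exp-N-growth} is stated for arbitrary $A \subseteq V(G)$, not just subsets of one color class.
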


\begin{prop}[volume growth]
  \label{prop:bi-exp-balls-growth}
  Let $G$ be a $d$-regular $2n$-vertex $\lambda$-bi-expander. Then for every non-negative integer $t$ and every $v \in V(G)$,
  \[
  |B(v,t)| \geq \min\left\{ \frac{n}{2} , \left(\frac{d}{2\lambda}\right)^{2t} \right\}.
  \]
\end{prop}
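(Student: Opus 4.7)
The proposition is the bipartite analogue of Proposition~\ref{prop:exp-balls-growth}, and the plan is to mimic that proof verbatim, with Proposition~\ref{prop:bi-exp-N-growth} replacing Proposition~\ref{prop:exp-N-growth}. Specifically, I would fix an arbitrary vertex $v$ and proceed by induction on $t$, with the inductive content being
\[
|B(v,t)| \ge \min\left\{\tfrac{n}{2},\; \left(\tfrac{d}{2\lambda}\right)^{2t}\right\}.
\]

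The base case $t=0$ is immediate: $|B(v,0)| = 1$, and the right-hand side is at most $1$ (if $n = 1$, the graph has only two vertices and the statement is trivial). For the inductive step, the key observation is the containment $B(v,t+1) \supseteq \n(B(v,t))$, valid because every neighbor of a vertex at distance at most $t$ from $v$ is itself at distance at most $t+1$ from $v$. Applying Proposition~\ref{prop:bi-exp-N-growth} to $A = B(v,t)$ then yields
\[
|B(v,t+1)| \;\ge\; |\n(B(v,t))| \;\ge\; \min\left\{\tfrac{n}{2},\; \tfrac{d^2}{4\lambda^2}|B(v,t)|\right\}.
\]
Substituting the inductive hypothesis on $|B(v,t)|$ into the right-hand side, and using the fact that $\tfrac{d^2}{4\lambda^2}\cdot\tfrac{n}{2} \ge \tfrac{n}{2}$ whenever $\lambda \le d/2$, one checks that the inner quantity is at least $\min\{\tfrac{n}{2}, (d/(2\lambda))^{2(t+1)}\}$, which closes the induction.

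I do not anticipate any real obstacle: both the induction and the use of Proposition~\ref{prop:bi-exp-N-growth} are formally identical to the non-bipartite proof. The only small point is that when $\lambda > d/2$ the quantity $(d/(2\lambda))^{2t}$ drops below $1$, rendering the bound vacuous (since balls always contain at least one vertex), so the only interesting regime is $\lambda \le d/2$, where $d^2/(4\lambda^2) \ge 1$ and the induction goes through cleanly. This is exactly why the paper flags these bi-expander facts as routine exercises left to the reader.
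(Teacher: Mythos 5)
Your proof is correct and follows exactly the route the paper intends: the paper explicitly leaves this bi-expander analogue as an exercise, pointing to the non-bipartite Proposition~\ref{prop:exp-balls-growth} as the model, and you reproduce that induction verbatim with Proposition~\ref{prop:bi-exp-N-growth} substituted in. Nothing is missing.
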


\begin{cor}[diameter]
  \label{cor:bi-exp-diam}
  Let $G$ be a $d$-regular $2n$-vertex $\lambda$-bi-expander. If $\lambda \leq d/8$, then
  \[
  \diam(G) \leq \left(\log \frac{d}{2\lambda} \right)^{-1} \cdot \log n + 1.
  \]
\end{cor}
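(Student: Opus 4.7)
The plan is to mimic the proof of Corollary~\ref{cor:exp-diam}, inserting a single radius increment to align color classes before invoking the bi-expander connectivity lemma (Proposition~\ref{prop:bi-exp-small-sets}).

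Fix arbitrary vertices $v,w\in V(G)$ and let $t$ be the smallest integer such that both $|B(v,t)|$ and $|B(w,t)|$ exceed $2\lambda n/d$; note that $2\lambda n/d < n/2$ since $\lambda\le d/8$. Exactly as in the proof of Corollary~\ref{cor:exp-diam}, Proposition~\ref{prop:bi-exp-balls-growth} gives
\[
t \le \Bigl(2\log\frac{d}{2\lambda}\Bigr)^{-1}\log\frac{2\lambda n}{d} = \Bigl(2\log\frac{d}{2\lambda}\Bigr)^{-1}\log n - \frac{1}{2}.
\]
By pigeonhole, each of $B(v,t)$ and $B(w,t)$ contains more than $\lambda n/d$ vertices in at least one color class; denote these colors by $i_v$ and $i_w$ respectively.

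If $i_v\ne i_w$, then the sets $B(v,t)\cap V_{i_v}$ and $B(w,t)\cap V_{i_w}$ already lie in opposite color classes, so Proposition~\ref{prop:bi-exp-small-sets} supplies an edge between them, giving $\dist_G(v,w) \le 2t+1$. If instead $i_v = i_w$, I replace $B(v,t)$ by $B(v,t+1)$: by Proposition~\ref{prop:bi-exp-N-growth} combined with $\lambda\le d/8$,
\[
|B(v,t+1)\cap V_{1-i_v}| \ge |N(B(v,t)\cap V_{i_v})| \ge \min\Bigl\{\frac{n}{2},\; \frac{d^{2}}{4\lambda^{2}}\cdot\frac{\lambda n}{d}\Bigr\} = \frac{n}{2} > \frac{\lambda n}{d}.
\]
Now $B(v,t+1)\cap V_{1-i_v}$ and $B(w,t)\cap V_{i_w}$ lie in opposite color classes with each exceeding $\lambda n/d$, so Proposition~\ref{prop:bi-exp-small-sets} again supplies an edge, yielding $\dist_G(v,w) \le (t+1)+t+1 = 2t+2$.

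Combining the worst-case bound $\dist_G(v,w) \le 2t+2$ with the estimate on $t$ above produces the claim $\diam(G)\le (\log(d/2\lambda))^{-1}\log n + 1$. The main obstacle is the case $i_v=i_w$; the strengthening $\lambda\le d/8$ (as opposed to merely $\lambda<d/2$ in the non-bipartite setting) is precisely what guarantees that one neighborhood step carries the complementary color class above the threshold $\lambda n/d$ in one shot, so that no further iteration is required.
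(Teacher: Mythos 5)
Since the paper explicitly leaves the bi-expander preliminaries as exercises, there is no written proof of Corollary~\ref{cor:bi-exp-diam} to compare against; your argument is the natural bipartite adaptation of the proof of Corollary~\ref{cor:exp-diam} and is correct. Raising the ball threshold to $2\lambda n/d$, pigeonholing onto a colour class, and inserting a single extra neighbourhood step when $i_v=i_w$ is exactly the right mechanism, and it accounts precisely for the additive $+1$ in the stated bound. One remark on your closing sentence: it is not accurate that $\lambda\le d/8$ is ``precisely'' what makes one expansion step suffice — the computation $\min\bigl\{\tfrac{n}{2},\,\tfrac{d^{2}}{4\lambda^{2}}\cdot\tfrac{\lambda n}{d}\bigr\}=\tfrac{n}{2}>\tfrac{\lambda n}{d}$ already holds for every $\lambda<d/2$, so that step would go through under the same hypothesis as the non-bipartite corollary; the assumption $\lambda\le d/8$ merely provides comfortable slack (e.g.\ $2\lambda n/d<n/2$ only needs $\lambda<d/4$). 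Your intermediate bound $t\le(2\log\tfrac{d}{2\lambda})^{-1}\log\tfrac{2\lambda n}{d}$ also carries the same small integrality imprecision that appears in the paper's own proof of Corollary~\ref{cor:exp-diam} (Proposition~\ref{prop:bi-exp-balls-growth} strictly only gives $t$ at most one more than that quantity), but since you are faithfully mirroring that template, this is not a defect introduced by you.
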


\section{Lipschitz functions}

\label{sec:Lipschitz}

Assume that $G$ is a connected $d$-regular $n$-vertex
$\lambda$-expander that is $M$-good, which roughly means that
$\lambda \ll d/(M\log d)$. Let $v$ and $v_0$ be two (not necessarily
distinct) vertices of $G$ and let $t$ be a positive integer. Recall
the definition of $\phase$ from Lemma~\ref{lemma:phase}. We will
estimate the probability that a uniformly chosen random function $f
\in_R \Lip_{v_0}(G; M)$ is in the event
\[
\Omega = \big\{f \in \Lip_{v_0}(G;M) \colon \dist(f(v), \phase(f)) > (t-1)M \big\}.
\]
We first briefly describe our strategy. Our proof of Theorem~\ref{thm:phase_dev} is divided into two  (independent) parts.

In the first part, described in Sections~\ref{sec:constructing-map-T} and~\ref{sec:properties-T}, we construct a map $T \colon \Omega \to \cP(\Lip_{v_0}(G;M))$ such that the set $T(f)$ is large for every $f \in \Omega$. Moreover, for every $g$,
we bound the size of the set $\{f \in \Omega \colon g \in T(f) \}$.
This is crucial in estimating the probability of $\Omega$ using Lemma~\ref{lemma:double-counting}. In this part of the proof, we do not use the assumption that $G$ is a $\lambda$-expander.

In the second part of the proof, Section~\ref{sec:bound-omega}, we derive the claimed bound on $\P(\Omega)$ using the properties of the transformation $T$ and the underlying graph $G$. Here, we strongly use the assumption that $G$ is a good expander.

In fact, we partition $\Omega$ into two parts and argue as above on
each part:
\[
\Omega^+ = \{f \in \Omega \colon f(v) > \max \phase(f) \}
\quad \text{and} \quad
\Omega^- = \{f \in \Omega \colon f(v) < \min \phase(f)  \}.
\]
Since $\phase(- f) = - \phase(f)$ by the definition of the phase, the map $f \mapsto -f$ is a
bijection between $\Omega^+$ and $\Omega^-$. Thus,
\begin{equation}\label{eq:P_Omega_bound}
\P (\Omega) \leq \P(\Omega^+) + \P(\Omega^-) \leq 2 \P(\Omega^+) .
\end{equation}
Hence, from now on we can focus our attention on the event
$\Omega^+$.

\subsection{Constructing the transformation $T$}

\label{sec:constructing-map-T}

Let $f \in \Omega^+$. Observe that $f$ is not the zero function. In
other words, let $f \in \Lip_{v_0}(G;M)$ satisfy $f(v) > k + tM$,
where $k$ is the unique integer such that $\{k, \ldots, k+M\} =
\phase(f)$. To make our argument more general, for the remainder of
this section and in Section~\ref{sec:properties-T}, let us disregard
the fact that $G$ is an $M$-good expander and the precise definition
of $k$. Let us only assume that $G$ is an arbitrary finite connected
graph with two (not necessarily distinct) fixed vertices $v_0$ and
$v$, that an arbitrary function $k \colon \Lip_{v_0}(G;M)\to\Z$ is given
and that $\Omega^+$ is a subset of all $f \in \Lip_{v_0}(G;M)$
satisfying $f(v)
> k+M$ for $k = k(f)$. Our only requirement on the function $k$ is that for each
$f\in\Lip_{v_0}(G;M)$ there exists some vertex $w$ on which $f(w)\le
k+M$ for $k = k(f)$. Let
\begin{center}
$A(f)$ be the connected component of the vertex $v$ in the subgraph
of $G^{\leq 2}$ \\ induced by the set of vertices  $\{w \colon
f(w) > k+M\}$.
\end{center}
Our requirement on $k$ implies that $A(f)$ does not contain all vertices of
$G$. Let us further partition the event $\Omega^+$. Let $\C_v$ be
the family of all sets of vertices $A \subseteq V(G)$ such that $v
\in A$ and $A$ is connected in $G^{\leq 2}$. For every $A \in \C_v$,
let
\[
\Omega_A^+ = \{f \in \Omega^+ \colon A(f) = A\}.
\]

\begin{claim}\label{cl:f_on_A}
Let $A \in \C_v$ and $f \in \Omega_A^+$.
Set $X = \b(A)$.
Then, the following properties hold:
\begin{enumerate}
\item \label{item:fA}
 $\min f(A) > k+M$,
\item \label{item:fX}
  $ f(X) \subseteq \{k+1,k+2,\ldots,k+M\}$, and
\item \label{item:fd2A}
  $\max f(\b^2(A)) \leq k+M$.
\end{enumerate}
\end{claim}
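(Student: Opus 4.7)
My plan is to verify each of the three conclusions as a direct consequence of how $A(f)$ is defined, combined with the $M$-Lipschitz property of $f$. The common principle driving all three items is that $A$ is saturated in $G^{\le 2}$: no vertex at $G$-distance at most $2$ from $A$ can take a value exceeding $k+M$, since otherwise it would be absorbed into the $G^{\le 2}$-connected component $A(f)$. Item~(\ref{item:fA}) is immediate from this: by the very definition of $A(f)$, we have $A \subseteq \{w \colon f(w) > k+M\}$, so every $a \in A$ satisfies $f(a) > k+M$.

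For Item~(\ref{item:fX}), let $w \in X = \b(A)$, so $w \notin A$ while $w$ has a $G$-neighbor $a \in A$. The $M$-Lipschitz condition gives $f(w) \ge f(a) - M > (k+M) - M = k$, hence $f(w) \ge k+1$. For the upper bound I would argue by contradiction: if $f(w) > k+M$, then $w$ lies in the set $\{w' \colon f(w') > k+M\}$ that induces the subgraph of $G^{\le 2}$ defining $A(f)$, and since $w$ is $G$-adjacent (hence $G^{\le 2}$-adjacent) to $a \in A$, it would lie in the $G^{\le 2}$-component of $v$, forcing $w \in A$ and contradicting $w \in \b(A)$.

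Item~(\ref{item:fd2A}) is where the choice to work in $G^{\le 2}$ rather than in $G$ really earns its keep. If $w \in \b^2(A)$, then $w$ is at $G$-distance exactly $2$ from some vertex of $A$, hence at $G^{\le 2}$-distance $1$ from $A$. Repeating the absorption argument from the previous item, $f(w) > k+M$ would place $w$ inside $A$, contradicting $w \notin A \cup \n(A)$.

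I do not expect any real obstacle, as the whole claim is a routine unpacking of the definitions. The only point requiring even mild care is that the lower bound in Item~(\ref{item:fX}) genuinely uses a $G$-edge from $w$ to $A$ (not merely a $G^{\le 2}$-edge); this is fine because $\b(A)$ is defined with respect to $G$ rather than $G^{\le 2}$.
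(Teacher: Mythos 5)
Your proposal is correct and matches the paper's own argument: item~1 is definitional, the lower bound in item~2 uses the $M$-Lipschitz property along a $G$-edge into $A$, and the upper bound in item~2 and all of item~3 use the ``absorption'' argument that a vertex at $G^{\le 2}$-distance one from $A$ with value exceeding $k+M$ would have to belong to the connected component $A(f)$. Nothing to add.
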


\begin{proof}
\begin{enumerate}
\item Follows since $A$ is a subset of the set of vertices $w$
such that $f(w) > k+M$.
\item  Since $A$ is defined as a connected component,
if $w$ in $X$ satisfies $f(w) > k+M$,
then $w$ is in $A$ as well, which is a contradiction ($X \cap A = \emptyset$).
Since $f$ is $M$-Lipschitz, and $\min f(A) > k+M$,
we have $\min f(X) \geq \min f(\n(A)) > k$.
 \item Similarly to \ref{item:fX}., if $w\in\b^2(A)$ and satisfies
 $f(w)>k+M$, then, since $A$ is defined as a connected component in
 $G^{\leq 2}$, we have $w\in A$, a contradiction.\qedhere
\end{enumerate}

\end{proof}

For the discussion, fix an $f \in \Omega^+_A$ for some $A \in \C_v$ and set $X = \b(A)$.
Here is a first hint on how to define $T(f)$. For any $s \in \Z^X$, define
$h_s = h_s(f)$, a map from $V(G)$ to $\Z$, by
\[
h_s(w) =
\begin{cases}
  k+M & w \in A ,\\
  k+s_w & w \in X, \\
  f(w) & w \not\in A \cup X.
\end{cases}
\]
Clearly, not every $s \in \Z^X$ gives rise to an $M$-Lipschitz function $h_s$. Still, it is quite easy to identify a large subset of $\Z^X$ that does have this property. To this end, for every $x \in X$, let
$$u_x = u_x(f) = \min\left(\{f(w) + M - k \colon w \in N(x) , w \notin A \cup X \} \cup \{M\}\right)
$$
and
$$\ell_x = \ell_x(f) = \max\{f(w) - M - k \colon w \in N(x) \cap A\}.$$

The following propositions clarifies the relation between the sequences defined above.
\begin{prop}
  \label{prop:lu}
  Let $A \in \C_v$, let $f \in \Omega_A^+$, and let $X = \b(A)$. Then, for every $x \in X$,
  \begin{equation}\label{eq:lu}
  1 \leq \ell_x(f) \leq f(x) - k \leq u_x(f) \leq M.
  \end{equation}
\end{prop}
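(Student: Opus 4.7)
The plan is to verify the chain of inequalities one by one, drawing on Claim~\ref{cl:f_on_A} and the $M$-Lipschitz property of $f$. All four bounds follow by direct inspection of the definitions, so there is no serious obstacle beyond careful bookkeeping; the only structural observation needed is that since $x \in X = \b(A) = \n(A) \setminus A$, the set $\n(x) \cap A$ is non-empty, so the maximum in the definition of $\ell_x$ is well-defined.

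First, the outer inequalities. The bound $u_x \le M$ is immediate, since $M$ is one of the elements over which the minimum in the definition of $u_x$ is taken. For $\ell_x \ge 1$, pick any $w \in \n(x) \cap A$; Claim~\ref{cl:f_on_A}\eqref{item:fA} gives $f(w) > k + M$, and since $f$ is integer-valued this means $f(w) \ge k + M + 1$, so $f(w) - M - k \ge 1$, and therefore $\ell_x \ge 1$.

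Next, the inner inequalities, which use the Lipschitz property. For $\ell_x \le f(x) - k$, note that any $w \in \n(x) \cap A$ is adjacent to $x$, so $f(w) \le f(x) + M$, giving $f(w) - M - k \le f(x) - k$; taking the maximum over such $w$ yields the claim. For $f(x) - k \le u_x$, we bound $f(x) - k$ against each element defining $u_x$: Claim~\ref{cl:f_on_A}\eqref{item:fX} gives $f(x) \le k + M$, so $f(x) - k \le M$; and for any $w \in \n(x) \setminus (A \cup X)$, the Lipschitz property yields $f(x) \le f(w) + M$, hence $f(x) - k \le f(w) + M - k$. Taking the minimum over all these bounds gives $f(x) - k \le u_x$, completing the chain.
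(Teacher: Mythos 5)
Your proof is correct and follows essentially the same route as the paper's: each of the four inequalities is verified directly from the definitions of $\ell_x$ and $u_x$, the Lipschitz condition, and parts~\ref{item:fA} and~\ref{item:fX} of Claim~\ref{cl:f_on_A}. The only cosmetic difference is that the paper fixes a specific maximizer $w_x$ for $\ell_x$, whereas you argue over an arbitrary $w \in \n(x)\cap A$ and then take the maximum; both are fine, and your explicit remark that $\n(x)\cap A \neq \emptyset$ (so $\ell_x$ is well-defined) is a welcome bit of care.
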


The proposition shows that $u_x$ is an {\em upper} bound on the value
of $f(x)-k$, and $\ell_x$ is a {\em lower} bound.

\begin{proof}
  Fix $x \in X$.  There are four inequalities to prove.
  First, let $w_x$ be an element of $N(x) \cap A$ such that $\ell_x(f) = f(w_x) - M - k$.
  Since $w_x \in A$, it follows from~\ref{item:fA} in Claim~\ref{cl:f_on_A} that $f(w_x) > k+M$, which proves $\ell_x(f) \geq 1$.
  Second, since $\{x,w_x\}$ is an edge of $G$ and $f$ is $M$-Lipschitz, it follows that $f(x) \geq f(w_x) - M = \ell_x(f) + k$.
  Third, because $f(x) \leq k+M$ (see~\ref{item:fX} in Claim~\ref{cl:f_on_A}) and $f(x) \leq f(w) + M$ for every $w \in N(x)$,
  \[
  f(x) - k \leq \min\left(\{f(w) + M - k \colon w \in N(x) ,w \notin  A \cup X \} \cup \{M\}\right) = u_x(f).
  \]
  Finally, the inequality $u_x(f) \leq M$ follows directly from the definition of $u_x(f)$.
\end{proof}

Using the sequence $(u_x)$, we can define a large family of Lipschitz functions. Let
\[
S = S(f) = \big\{s \in \Z^X \colon s_x \in \{0, \ldots, u_x(f)\} \text{ for every $x \in X$} \big\}.
\]

\begin{claim}
  \label{claim:S}
  For every $f$ in $\Omega^+$
  and for every $s \in S(F)$,
  the map $h_s = h_s(f)$ is an $M$-Lipschitz function
(but there is no guarantee that $h_s(v_0)=0$.)
\end{claim}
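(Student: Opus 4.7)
The plan is to verify the $M$-Lipschitz condition $|h_s(u) - h_s(w)| \le M$ directly, by checking each edge $\{u,w\}$ of $G$ according to which cells of the partition $V(G) = A \sqcup X \sqcup (V(G) \setminus (A \cup X))$ its endpoints lie in. Proposition~\ref{prop:lu} and Claim~\ref{cl:f_on_A} supply exactly the ingredients needed to control the values of $h_s$ near the boundary of $A$.

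Most of the cases are immediate. If both endpoints lie in $A$, then $h_s$ is identically $k+M$, so the difference is $0$. If both lie outside $A \cup X$, then $h_s = f$ at both and the $M$-Lipschitz property of $f$ applies. If both lie in $X$, the difference $s_u - s_w$ lies in $[-M, M]$ because, by Proposition~\ref{prop:lu}, each $s_x \in [0, u_x] \subseteq [0, M]$. An edge joining $A$ to $V(G) \setminus (A \cup X)$ cannot exist at all: any neighbor of a vertex of $A$ that is not itself in $A$ belongs to $\b(A) = X$. On an edge from $u \in A$ to $w \in X$, the difference equals $M - s_w$, which lies in $[0, M]$.

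The only case that really uses the definition of $u_x$ is an edge $\{u,w\}$ with $u \in X$ and $w \notin A \cup X$; here $h_s(u) - h_s(w) = k + s_u - f(w)$. The upper bound $k + s_u - f(w) \le M$ is built into the definition of $u_u$: since $w \in N(u)$ and $w \notin A \cup X$, the quantity $f(w) + M - k$ appears in the minimum defining $u_u$, so $s_u \le u_u \le f(w) + M - k$. For the matching lower bound $k + s_u - f(w) \ge -M$, I would first observe that $w$ lies in $\b^2(A)$: we have $u \in \b(A) \subseteq N(A)$, so $w \in N(N(A))$, while $w \notin A \cup X$ together with $X = N(A) \setminus A$ forces $w \notin A \cup N(A)$. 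Item~3 of Claim~\ref{cl:f_on_A} then yields $f(w) \le k + M$, and $s_u \ge 0$ closes the bound.

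The only non-mechanical piece of bookkeeping is the verification that $w \in \b^2(A)$ in this last case so that item~3 of Claim~\ref{cl:f_on_A} may be invoked; everything else amounts to unwinding the definitions of $h_s$, $S(f)$, and $u_x$.
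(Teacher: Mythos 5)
Your proposal is correct and follows essentially the same case analysis as the paper's proof, checking each edge according to the partition $A \sqcup X \sqcup (A \cup X)^c$ and invoking Proposition~\ref{prop:lu} and item~3 of Claim~\ref{cl:f_on_A} for the edges straddling $X$ and its exterior. The only difference is cosmetic: you spell out why a vertex $w \notin A \cup X$ adjacent to some $u \in X$ must lie in $\b^2(A)$, a step the paper leaves implicit.
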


\begin{proof}
  Fix  $s \in S$ and let $\{w_1, w_2\}$ be an edge of $G$. It suffices to show that $|h_s(w_1) - h_s(w_2)| \leq M$. In order to prove this inequality, we consider several cases depending on the locations of the vertices $w_1$ and $w_2$.

  \medskip
  \noindent
  {\bf Case 1.} If $w_1, w_2 \in A$, then $|h_s(w_1) - h_s(w_2)| = 0$.

  \medskip
  \noindent
  {\bf Case 2.} If $w_1, w_2 \in X$, then $|h_s(w_1) - h_s(w_2)| = |s_{w_1} - s_{w_2}| \leq \max\{u_{w_1}, u_{w_2}\} \leq M$.

  \medskip
  \noindent
  {\bf Case 3.} If $w_1, w_2 \not\in A \cup X$, then $|h_s(w_1) - h_s(w_2)| = |f(w_1) - f(w_2)| \leq M$.

  \medskip
  \noindent
  {\bf Case 4.} If $w_1 \in A$ and $w_2 \in X$, then $|h_s(w_1) - h_s(w_2)| = |k+M - k - s_{w_2}| \leq M$.

  \medskip
  \noindent
  {\bf Case 5.} If $w_1 \in X$ and $w_2 \not\in A \cup X$, then,
  by definition of $u_{w_1}$,
  \[
  h_s(w_1) - h_s(w_2) = k + s_{w_1} - f(w_2) \leq k + u_{w_1}(f) - f(w_2)
  \leq M .
  \]
 On the other hand, by~\ref{item:fd2A} in Claim~\ref{cl:f_on_A},
  \[
  h_s(w_1) - h_s(w_2) = k + s_{w_1} - f(w_2) \geq k - k - M = -M.
  \]

  \medskip
  Since there are no edges between $A$ and $(A \cup X)^c$, the proof is now complete.
\end{proof}

One might be tempted to suggest $T(f) = \{h_s \colon s \in S(f)\}$.
Although this is a reasonable guess, one needs to be somewhat careful.
As $v_0$ is arbitrary, it might happen that 
$h_s(v_0) \neq 0$ for some $s \in S$. Luckily, the following claim reassures us that this is not a serious issue.
For this, we define the ``shift'' operator $P_{v_0}$
on functions $h: V(G) \to \Z$ by
$$P_{v_0}(h) = h - h(v_0).$$
Clearly, $P_{v_0}(h)(v_0) = 0$
and if $h$ is $M$-Lipschitz, so is $P_{v_0}(h)$.

\begin{claim}
  \label{claim:one-to-one}
Let $f\in\Omega^+$ and $S = S(f)$. The shift $P_{v_0}$ is one-to-one
on $\{h_s \colon s \in S\}$.
\end{claim}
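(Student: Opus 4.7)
The plan is to show that if $s,s' \in S$ satisfy $P_{v_0}(h_s) = P_{v_0}(h_{s'})$, then $s = s'$. The first step is to observe that $P_{v_0}(h_s) = P_{v_0}(h_{s'})$ is equivalent to the statement that $h_s - h_{s'}$ is a constant function on $V(G)$, namely the constant $c := h_s(v_0) - h_{s'}(v_0)$.

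Next, using the explicit definition of $h_s$, I would compute $h_s - h_{s'}$ pointwise. By construction, $h_s(w) = h_{s'}(w) = k+M$ for every $w \in A$, $h_s(w) - h_{s'}(w) = s_w - s'_w$ for every $w \in X$, and $h_s(w) = h_{s'}(w) = f(w)$ for every $w \notin A \cup X$. Thus, the difference $h_s - h_{s'}$ vanishes on $A \cup ((A \cup X)^c)$.

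Now I would use the fact that $A \neq \emptyset$: since $f \in \Omega^+$ implies $f(v) > k+M$, the vertex $v$ itself lies in $A$, so $A$ contains at least one vertex. Therefore $h_s - h_{s'}$ equals $0$ at some vertex of $A$, and since this difference is the constant $c$, we must have $c = 0$. Consequently $h_s = h_{s'}$ as functions on $V(G)$, and comparing values on $X$ gives $s_w = s'_w$ for every $w \in X$, i.e.\ $s = s'$. This completes the argument.

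I do not anticipate any real obstacle here; the content of the claim is essentially that the only way two elements of $\{h_s : s \in S\}$ can differ is by changing the values on $X$, and any such change is detectable even after an additive shift because $h_s$ is pinned to the fixed value $k+M$ on the non-empty set $A$.
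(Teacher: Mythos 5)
Your proof is correct and uses essentially the same idea as the paper's: the function $h_s$ is pinned to the fixed value $k+M$ on the non-empty set $A$ (the paper uses the single vertex $v\in A$ and writes down the explicit reconstruction $h_s = P_{v_0}(h_s) + (k+M - P_{v_0}(h_s)(v))$; you instead argue that the constant difference $h_s-h_{s'}$ must vanish because it vanishes on $A$). A small bonus of your phrasing is that it directly shows the composition $s\mapsto P_{v_0}(h_s)$ is injective on $S$, which is the form actually invoked in the proof of Lemma~\ref{lemma:Lipschitz-main}\ref{item:Lipschitz-main-1}.
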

\begin{proof}
  Let $s \in S$.
  Since $h_s(v) = k+M$ and $h_s - P_{v_0}(h_s)$ is a constant function,
  \[
  h_s = P_{v_0}(h_s) + (k+M - P_{v_0}(h_s)(v)).\qedhere
  \]
\end{proof}

Finally, define $T \colon \Omega^+ \to \cP(\Lip_{v_0}(G;M))$ by
\begin{equation}
  \label{eq:Tf-def}
  T(f) =  \left\{P_{v_0}(h_s):  s \in S(f)\right\}.
\end{equation}

\subsection{Properties of $T$}

\label{sec:properties-T}

Before we establish the key properties of $T$, we need to further refine our partition of $\Omega^+$. For every $A \in \C_v$, let
\[
\S(A) = \left\{S(f) \colon f \in \Omega_A^+\right\}
\]
and for every $S \in \S(A)$, let
\[
\Omega_{A,S}^+ = \left\{ f \in \Omega_A^+ \colon S(f) = S \right\}.
\]
We are now ready to prove a key lemma.

\begin{lemma}
  \label{lemma:Lipschitz-main}
Let $A \in \C_v$ and $X = \b(A)$.
For every $S = \prod_{x \in X} \{0,\ldots,u_x\}$ in $\S(A)$, the following holds:
  \begin{enumerate}
  \item \label{item:Lipschitz-main-1}
    If $f \in \Omega_{A,S}^+$, then $|T(f)| = |S|$.
  \item \label{item:Lipschitz-main-2}
    For every $h \in \Lip_{v_0}(G;M)$, we have
    \[
    |\{f \in \Omega_{A,S}^+ \colon h \in T(f)\}| \leq M (2|A| + 1)(2M+1)^{|A|} \big|S_-\big|,
    \]
    where
    \[
S_-  = \prod_{x \in X} \{1,\ldots,u_x\}.
\]
  \end{enumerate}
\end{lemma}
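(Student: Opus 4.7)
Part~\ref{item:Lipschitz-main-1} is essentially immediate: the map $s \mapsto h_s(f)$ is injective on $S$, since reading $h_s|_X = k+s$ recovers $s$, and combined with Claim~\ref{claim:one-to-one} (injectivity of $P_{v_0}$ on $\{h_s(f) : s \in S\}$) this gives $|T(f)| = |S|$.

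For part~\ref{item:Lipschitz-main-2}, fix $h \in \Lip_{v_0}(G;M)$; the aim is to reconstruct each $f \in \Omega^+_{A,S}$ with $h \in T(f)$ from $h$ and a small amount of extra data. Writing $c := h_s(v_0)$, so that $h = h_s - c$, and using $h_s \equiv k+M$ on $A$, we see that $h$ must be constant on $A$; set $c_A := h|_A$ (otherwise the count is zero), so that $k = c + c_A - M$. Reading off the remaining coordinates of $h$ then determines $f|_{V(G) \setminus (A \cup X)} = h|_{V(G) \setminus (A \cup X)} + c$ and the ``label'' $s$ used in $h_s$ (via $s_x = h(x) + M - c_A$) from $h$ and $A$ alone, leaving only $f|_X$ and $f|_A$ unspecified. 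Hence $f$ is determined by the triple $(c,\, s^*,\, f|_A)$, where $s^*_x := f(x) - k$ denotes the \emph{true} $X$-values of $f$ (which need not coincide with $s$).

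I now bound the three factors separately. Proposition~\ref{prop:lu} gives $s^*_x \in \{\ell_x, \ldots, u_x\}$, contributing at most $|S_-| = \prod_x u_x$ choices for $s^*$. For $f|_A$, I first observe that every $G$-component of $G[A \cup X]$ must intersect $X$: otherwise a $G$-component $C \subseteq A$ of $G[A \cup X]$ would satisfy $\n(C) \subseteq C$ (each $G$-neighbor of a vertex of $A$ lies in $A \cup X$ and belongs to the same $G$-component), so $C$ would be a proper nonempty subset of $V(G)$ closed under $\n$, contradicting the connectedness of $G$. Hence a BFS in $G[A \cup X]$ rooted at $X$ reaches every $w \in A$; assigning $w$ a $G$-predecessor $p(w)$ along this BFS and using $|f(w) - f(p(w))| \le M$ gives at most $2M+1$ choices for $f(w)$ once earlier values are fixed, for a total of $(2M+1)^{|A|}$ choices for $f|_A$ given $(c, s^*)$. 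Finally, for $c$ (equivalently $k$): when $v_0 \notin A$ the constraint $f(v_0) = 0$ pins $c$ down uniquely; when $v_0 \in A$, I use $G^{\le 2}$-connectivity of $A$ to find some $x \in X$ with $\dist_G(v_0, x) \le 2|A| - 1$, so that $|f(v_0) - f(x)| \le M(2|A|-1)$, and then the relation $f(x) = k + s^*_x$ with $s^*_x \in \{1, \ldots, M\}$ confines $k$ to an interval of length at most $M(2|A|+1)$. Multiplying the three bounds gives the claimed estimate.

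The main technical hurdle will be the case $v_0 \in A$, where $c$ is not pinned down by $h$ and must be bounded using the Lipschitz geometry inside $A$, together with the bookkeeping needed to verify that each valid triple $(c, s^*, f|_A)$ reconstructs at most one $f \in \Omega^+_{A,S}$ and that the reconstructed $f$ actually satisfies $k(f) = k$ and $A(f) = A$.
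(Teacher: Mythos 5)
Part~\ref{item:Lipschitz-main-1} is correct and matches the paper. The structure of your part~\ref{item:Lipschitz-main-2} argument (injectively encode $f$ by $(c,s^*,f|_A)$ and multiply bounds) is also the paper's, and your $|S_-|$ and $(2M+1)^{|A|}$ factors are handled correctly (your BFS-from-$X$ argument is an equivalent reformulation of the paper's spanning-tree, one-edge-to-$X$ count per $G$-component of $A$). However, your bound on the number of admissible values of $c$ (equivalently $k$) does not yield the constant $M(2|A|+1)$.

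Specifically, you establish $\dist_G(v_0,x)\le 2|A|-1$ via the $G^{\le 2}$-connectivity of $A$. That gives $|f(x)|\le M(2|A|-1)$, hence $k=f(x)-s^*_x$ with $s^*_x\in\{1,\ldots,M\}$ ranges over at most $2M(2|A|-1)+M=M(4|A|-1)$ integers, not $M(2|A|+1)$; your final sentence simply miscalculates. To get $M(2|A|+1)$ you need the sharper bound $\dist_G(v_0,X)\le |A|$, which does \emph{not} follow from $G^{\le2}$-connectivity of $A$. The paper obtains it by working with the $G$-component $A'$ of $v_0$ inside $A$: $A'$ has a $G$-spanning tree of $|A'|-1$ edges and (by connectedness of $G$) at least one edge into $X$, so $\dist_G(v_0,X)\le |A'|\le |A|$, whence $|f(x_0)|\le M|A|$ and $k\in[-M|A|-M,\, M|A|-1]$, an interval of exactly $M(2|A|+1)$ integers. (With your weaker distance bound the downstream use of Lemma~\ref{lemma:Lipschitz-main} would still go through with a slightly worse constant in Definition~\ref{def: good}, but the lemma as stated is not proved.) A secondary imprecision: when $v_0\in X$ (a subcase of your ``$v_0\notin A$''), $f(v_0)=0$ does not pin down $c$ by itself --- $k=-s^*_{v_0}$, so $c$ is determined only jointly with $s^*$; the product bound is still valid but your phrasing glosses over this. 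The paper avoids both issues by splitting on $v_0\notin A\cup X$ versus $v_0\in A\cup X$ and using the $G$-component argument in the latter case.
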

\begin{proof}
  {\it \ref{item:Lipschitz-main-1}.} By \eqref{eq:Tf-def} and Claim~\ref{claim:one-to-one},
  \[
  |T(f)| = |\{P_{v_0}(h_s) \colon s \in S\} | = |S|.
  \]

{\it \ref{item:Lipschitz-main-2}.}
Fix some $h \in \Lip_{v_0}(G;M)$.
Assume that we are given an integer $k$ and some function $f_0 \colon A \cup X \to \Z$. We claim that there is at most one $f \in \Omega_{A,S}^+$ such that
\begin{itemize}
\item $h \in T(f)$,
\item $k = k(f)$, and
\item $f(w) = f_0(w)$ for all $w \in A \cup X$.
\end{itemize}
It suffices to check that if there exists such an $f$, then we can
uniquely reconstruct its values on all vertices $w \notin A \cup X$
using only the fact that $h\in T(f)$ and the data in $h, f_0$ and
$k$. Recall that $h = P_{v_0}(h_s)$ for some $s \in S$ and that
$f(w) = h_s(w)$ for all $w \not\in A \cup X$. Hence, it suffices to
reconstruct $h_s(v_0)$. But $h_s(v_0) = h_s(v) - h(v) = k + M -
h(v)$.

Therefore, in order to bound the number of possible $f$s with $h\in
T(f)$, it suffices to bound the number of pairs $(k, f_0)$ for which
there exists an $f$ as above.

First, we bound the number of possibilities for $k$. If $v_0 \notin
A \cup X$ then since $h_s(v_0) = f(v_0) = 0$, we have $h(v)=k+M$ and
hence $k$ is uniquely determined by $h$. If $v_0\in A\cup X$,
observe that since any $f\in\Omega_{A,S}^+$ satisfies $f(v_0)=0$ and
can change by at most $M$ along each edge of $G$, there exists some
$x_0\in X$ such that $|f(x_0)|\le M|A|$ for all
$f\in\Omega_{A,S}^+$. It follows from part~\ref{item:fX}.\ of
Claim~\ref{cl:f_on_A} that $k\in[-M|A|-M,M|A|-1]$ and hence there
are at most $M(2|A|+1)$ options for $k(f)$ for $f\in\Omega_{A,S}^+$.


Now, fix $k$ and bound the number of possibilities for $f_0$. Start by bounding the number of possible values of $f_0$ on $X$.
Let $f \in \Omega_{A,S}^+$ be such that $h \in T(f)$. Since $S = S(f)$ by the definition of $\Omega_{A,S}^+$, by Proposition~\ref{prop:lu}, $1 \leq f(x) - k \leq u_x$ for every $x \in X$. In other words, $(f_0(x)-k)_{x \in X} \in S_-$.

Finally, we bound the number of possible values of $f_0$ on $A$. Fix
some $s \in S_-$ and suppose that $f_0(x) = k+s_x$ for every $x \in
X$. We bound the number of ways we can extend an $M$-Lipschitz $f_0$
from $X$ to $A \cup X$. Let $A'\subseteq A$ be a connected component
of $A$ in $G$. Observe that to specify $f_0$ on $A'$ it suffices to
fix a spanning tree of $A'$ in $G$ and specify the difference of
values of $f_0$ on the edges of this spanning tree and on a single
edge leading from $A'$ to $X$. Since the spanning tree has exactly
$|A'|-1$ edges, there are at most $(2M+1)^{|A'|}$ possibilities to
extend $f_0$ to $A'$. Multiplying this quantity over all connected
components of $A$ in $G$, we see that there are at most
$(2M+1)^{|A|}$ ways to extend $f_0$ from $X$ to $A$.
\end{proof}

Lemma~\ref{lemma:Lipschitz-main} already allows us to prove an
estimate on $|\Omega_{A,S}^+|$ using
Lemma~\ref{lemma:double-counting}.

\begin{cor}
  \label{cor:Omega_A,S}
  For every $A \in \C_v$ and $S \in \S(A)$,
  \[
  \frac{\big|\Omega_{A,S}^+\big|}{\big| T(\Omega_{A,S}^+)\big|} \leq
  M (2|A| + 1)(2M+1)^{|A|}\left(\frac{M}{M+1}\right)^{|\b(A)|}.
  \]
\end{cor}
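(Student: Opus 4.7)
The plan is to obtain the corollary as a direct application of the double counting Lemma~\ref{lemma:double-counting} with $P = \Omega_{A,S}^+$ and $Q = T(\Omega_{A,S}^+)$, using the two bounds supplied by Lemma~\ref{lemma:Lipschitz-main}.

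Concretely, part~\ref{item:Lipschitz-main-1} of Lemma~\ref{lemma:Lipschitz-main} says that $|T(f)| = |S|$ for every $f \in \Omega_{A,S}^+$, which provides the lower bound $\beta = |S|$ required by Lemma~\ref{lemma:double-counting}. Part~\ref{item:Lipschitz-main-2} says that every $h$ lies in $T(f)$ for at most $M(2|A|+1)(2M+1)^{|A|}|S_-|$ functions $f \in \Omega_{A,S}^+$; this is the upper bound $\alpha$. In particular, this is valid for every $h \in T(\Omega_{A,S}^+) \subseteq \Lip_{v_0}(G;M)$, which is what the lemma needs. Applying Lemma~\ref{lemma:double-counting} yields
\[
\frac{|\Omega_{A,S}^+|}{|T(\Omega_{A,S}^+)|} \;\leq\; \frac{\alpha}{\beta} \;=\; M(2|A|+1)(2M+1)^{|A|} \cdot \frac{|S_-|}{|S|}.
\]

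It remains to estimate the ratio $|S_-|/|S|$. By the product descriptions $S = \prod_{x \in X}\{0,\ldots,u_x\}$ and $S_- = \prod_{x \in X}\{1,\ldots,u_x\}$, we have
\[
\frac{|S_-|}{|S|} \;=\; \prod_{x \in X} \frac{u_x}{u_x + 1}.
\]
Since each factor $u/(u+1)$ is increasing in $u$, and Proposition~\ref{prop:lu} guarantees $u_x \leq M$ for every $x \in X$, each factor is bounded above by $M/(M+1)$. Using $|X| = |\partial(A)|$, we conclude $|S_-|/|S| \leq (M/(M+1))^{|\partial(A)|}$, which combined with the previous inequality gives the claimed bound. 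No step here is delicate: everything is already packaged in Lemma~\ref{lemma:Lipschitz-main} and the bound $u_x \leq M$ from Proposition~\ref{prop:lu}, so the only ``work'' is the elementary product estimate, and I expect no real obstacle.
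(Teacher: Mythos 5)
Your proof is correct and follows the same route as the paper: instantiate Lemma~\ref{lemma:double-counting} with $P=\Omega_{A,S}^+$, $Q=T(\Omega_{A,S}^+)$, plug in the two bounds from Lemma~\ref{lemma:Lipschitz-main}, and bound $|S_-|/|S|=\prod_{x\in X} u_x/(u_x+1)\le (M/(M+1))^{|\b(A)|}$ via $u_x\le M$ from Proposition~\ref{prop:lu}. The paper's proof is just a terser statement of exactly these steps.
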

\begin{proof}
  The claimed estimate follows directly from Lemmas~\ref{lemma:double-counting} and~\ref{lemma:Lipschitz-main} and the fact that
  \[
  \frac{|S_-|}{|S|} = \prod_{x \in X} \frac{u_x}{u_x+1} \leq \left(\frac{M}{M+1}\right)^{|X|}  \]
  with $X = \b(A)$.
\end{proof}

To derive a bound on $|\Omega_A^+|$, we
use the following property of the transformation $T$.

\begin{claim}
  \label{claim:TOmega_A,S-disjoint}
  For every $A \in \C_v$ and every $S \neq S'$ in $\S(A)$,
  \[
  T(\Omega_{A,S}^+) \cap T(\Omega_{A,S'}^+) = \emptyset.
  \]
\end{claim}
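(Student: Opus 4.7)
The plan is to prove disjointness by showing that, given only $h$ and $A$, one can reconstruct the tuple $(u_x)_{x \in \b(A)}$ that defines $S$. Since this recovery procedure makes no reference to $S$ itself, applying it to an $h$ lying in both $T(\Omega_{A,S}^+)$ and $T(\Omega_{A,S'}^+)$ will force $S = S'$, which is exactly disjointness.

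To set this up, I would pick any $h \in T(\Omega_{A,S}^+)$, write $h = P_{v_0}(h_s(f))$ for some $f \in \Omega_{A,S}^+$ and $s \in S$, and abbreviate $X = \b(A)$ and $k = k(f)$. Because $h_s \equiv k+M$ on $A$ by construction, the shifted function $h$ is also constant on $A$; denote this constant by $c$. Then $h_s(v_0) = (k+M) - c$, and for every $w \notin A \cup X$ (where $h_s(w) = f(w)$) one gets $f(w) = h(w) + (k+M) - c$. The critical observation is that, upon substitution into the definition of $u_x$, the parameter $k$ cancels:
\[
u_x = \min\bigl(\{f(w) + M - k : w \in N(x) \setminus (A \cup X)\} \cup \{M\}\bigr) = \min\bigl(\{h(w) + 2M - c : w \in N(x) \setminus (A \cup X)\} \cup \{M\}\bigr).
\]
The right-hand side involves only $h$, $A$, and the ambient graph, not $f$, $s$, or $k$.

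Consequently $S = \prod_{x \in X}\{0,\ldots,u_x\}$ is recovered from $h$ and $A$ alone. Running the same computation starting from any $f' \in \Omega_{A,S'}^+$ with $h \in T(f')$ produces the identical tuple $(u_x)$ — the only point to sanity-check is that the constant $c$ extracted in the two runs agrees, and this is automatic because $c$ is simply the common value of $h$ on $A$. Therefore $S = S'$, contradicting $S \neq S'$, so the intersection is empty. I do not expect any real obstacle here; the entire argument rests on the cancellation of $k$ displayed above, which is a direct consequence of how $h_s$ and $u_x$ are built from $f$.
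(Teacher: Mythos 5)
Your proof is correct and follows essentially the same route as the paper's: reconstruct $(u_x)_{x\in\b(A)}$ from $h$ and $A$ alone by exploiting that $h$ is constant on $A$ (the paper uses $h(v)$ where you use the constant $c$, but since $v\in A$ these coincide), and note that $k$ cancels in the resulting formula $u_x=\min(\{h(w)-h(v)+2M\}\cup\{M\})$.
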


\begin{proof}
  Fix some $A \in \C_v$ and let $X = \b(A)$. Let $f \in \Omega_A^+$ and fix an arbitrary $h \in T(f)$.
  To prove the claim, we show that we can reconstruct $(u_x(f))_{x \in X}$ from $h$.
  Recall that $h_s(w) = f(w)$ for every $w \not\in A \cup X$.
  Hence, for $w \not\in A \cup X$,
  \[
  f(w) + M - k = h_s(w) - h_s(v) + 2M = h(w) - h(v) + 2M,
  \]
  where the first equality follows from the fact that $h_s(v) = k+M$,
  and the second equality follows from the fact that $h$ is a ``shift'' of $h_s$.
  Therefore, for every $x \in X$,
  \begin{align*}
    u_x(f) & = \min\left(\{f(w) + M - k \colon w \in N(x) , w \notin A \cup X \} \cup \{M\}\right) \\
    & = \min\left(\{h(w) - h(v) + 2M \colon w \in N(x), w \notin A \cup X \} \cup \{M\}\right).
    \qedhere
  \end{align*}
\end{proof}

Concluding, Corollary~\ref{cor:Omega_A,S} and Claim~\ref{claim:TOmega_A,S-disjoint} imply the following bound on the probability of $\Omega_A^+$.

\begin{cor}
  \label{cor:POmegaA+}
  For every $A \in \C_v$,
  \[
  \P(\Omega_A^+) \leq M(2|A| + 1)(2M+1)^{|A|}\left(\frac{M}{M+1}\right)^{|\b(A)|}.
  \]
\end{cor}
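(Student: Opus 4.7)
The plan is to partition $\Omega_A^+$ according to the refined label $S = S(f)$, apply Corollary~\ref{cor:Omega_A,S} to each piece, and then use the disjointness provided by Claim~\ref{claim:TOmega_A,S-disjoint} to compress the sum of image sizes into a single bound against $|\Lip_{v_0}(G;M)|$.

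First I would note that by the definition of $\S(A)$ and of $\Omega_{A,S}^+$, the family $\{\Omega_{A,S}^+ : S \in \S(A)\}$ is a partition of $\Omega_A^+$, so $|\Omega_A^+| = \sum_{S \in \S(A)} |\Omega_{A,S}^+|$. Corollary~\ref{cor:Omega_A,S} bounds each summand by the product of $|T(\Omega_{A,S}^+)|$ with the $S$-independent quantity $M(2|A|+1)(2M+1)^{|A|}(M/(M+1))^{|\b(A)|}$, which can therefore be factored out of the sum to give
\[
|\Omega_A^+| \le M(2|A|+1)(2M+1)^{|A|}\left(\frac{M}{M+1}\right)^{|\b(A)|} \sum_{S \in \S(A)} |T(\Omega_{A,S}^+)|.
\]

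Next I would invoke Claim~\ref{claim:TOmega_A,S-disjoint}, which guarantees that the sets $T(\Omega_{A,S}^+)$ are pairwise disjoint as $S$ ranges over $\S(A)$. This converts the sum of sizes into the size of the union, which is bounded by $|\Lip_{v_0}(G;M)|$ since $T$ maps into $\cP(\Lip_{v_0}(G;M))$. Dividing the resulting inequality by $|\Lip_{v_0}(G;M)|$ turns $|\Omega_A^+|/|\Lip_{v_0}(G;M)|$ into $\P(\Omega_A^+)$ (recall $f$ is uniform on $\Lip_{v_0}(G;M)$) and yields precisely the claimed bound.

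I do not anticipate any real obstacle: all the substantive content has already been packaged — the construction of $T$ in Section~\ref{sec:constructing-map-T}, the few-to-many accounting in Lemma~\ref{lemma:Lipschitz-main}, the geometric ratio $|S_-|/|S| \le (M/(M+1))^{|\b(A)|}$ from Corollary~\ref{cor:Omega_A,S}, and the injectivity of $T$ modulo $\Omega_{A,S}^+$ in Claim~\ref{claim:TOmega_A,S-disjoint}. This corollary is purely the bookkeeping step that assembles these inputs into a probability estimate, and should take only a few lines.
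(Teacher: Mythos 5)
Your proof is exactly the intended argument: the paper states only that Corollary~\ref{cor:POmegaA+} follows from Corollary~\ref{cor:Omega_A,S} and Claim~\ref{claim:TOmega_A,S-disjoint}, and your reconstruction — partition $\Omega_A^+$ over $S \in \S(A)$, factor out the $S$-independent bound, use disjointness to replace $\sum_S |T(\Omega_{A,S}^+)|$ by the size of the union which is at most $|\Lip_{v_0}(G;M)|$, then normalize — is precisely the intended bookkeeping. No gaps.
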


\subsection{Bounding $\P(\Omega)$}

\label{sec:bound-omega}

\begin{proof}[Proof of Theorem~\ref{thm:phase_dev}]
Let us again assume that $G$ is a connected $d$-regular $n$-vertex
$\lambda$-expander that is $M$-good and recall that $\Omega^+$ is
the family of $f \in \Lip_{v_0}(G;M)$ that satisfy $f(v) > k + tM$,
where $k = k(f) = \min\phase(f)$. Fix some $f \in \Omega^+$, recall
the definition of $A(f)$ from Section~\ref{sec:constructing-map-T}
and observe that by \eqref{eq:phase_def} in Lemma~\ref{lemma:phase},
\[
|A(f)| \leq |\{w \colon f(w) > k + M\}| \leq |\{w \colon f(w) \not\in \phase(f) \}| \leq \frac{2\lambda n}{d}.
\]
Moreover, since $f(v) > k + tM$ and $f$ is $M$-Lipschitz, it follows that $B_G(v,t-1) \subseteq A(f)$. Let
\[
\A_v = \left\{A \in \C_v \colon \text{$B(v,t-1) \subseteq A$ and $|A| \leq \frac{2\lambda n}{d}$}\right\}.
\]
Clearly, the sets $\Omega_A^+$ defined in Section~\ref{sec:constructing-map-T}, with $A \in \A_v$, form a partition of $\Omega^+$.

We are now ready to derive a bound on $\P(\Omega)$. Essentially, the
bound follows from Corollary~\ref{cor:POmegaA+} by showing that the
expansion properties of $G$ imply that $|\b(A)|$ is much larger than
$|A|$ for $A \in \A_v$. For an integer $\alpha$, let
\[
\A_{v,\alpha} = \{A \in \A_v \colon |A| = \alpha \}.
\]
Since every $A \in \A_v$ is connected in $G^{\leq 2}$ and the maximum degree of $G^{\leq 2}$ is at most $d^2$, then by Lemma~\ref{lemma:connected-sets},
\begin{equation}
  \label{eq:Ava-count}
  |\A_{v,\alpha}| \leq d^{4\alpha}.
\end{equation}
Let $\alpha$ be such that $\A_{v,\alpha}$ is non-empty.
In particular, $|B(v,t-1)| \leq \alpha \leq \frac{2\lambda n}{d}$.
Let
\[
b_\alpha = \min\left\{|\b(A)| \colon A \in \A_{v,\alpha} \right\} .
\]
By Corollary~\ref{cor:exp-bdry-growth},  since $\lambda \leq
\frac{d}{8}$ and hence $\alpha\le \frac{n}{4}$,
\begin{equation}
  \label{eq:partial-a}
  b_\alpha \geq
  \min\left\{\frac{n}{4}, \frac{d^2}{5\lambda^2} \alpha \right\} \geq \frac{d \alpha}{8 \lambda} ,
\end{equation}
where the last inequality holds as $\alpha \leq 2\lambda n/d$. Hence, by Corollary~\ref{cor:POmegaA+} and~\eqref{eq:Ava-count},
\begin{align}
  \label{eq:P-Ava}
  \sum_{A \in \A_{v,\alpha}} \P(\Omega_A^+) &
  \leq d^{4\alpha} \cdot M (2\alpha+1)(2M+1)^{\alpha}\left(\frac{M}{M+1}\right)^{b_\alpha} \\
  &\le d^{4\alpha} M^\alpha e^{2\alpha} (3M)^\alpha
  \exp\left(-\frac{b_\alpha}{M+1}\right)\le
  \exp\left(-\frac{b_\alpha}{M+1}+2\log(9Md^2)\alpha\right)\\
  \nonumber
  & \leq \exp \left( - b_\alpha \left(  \frac{1}{M+1} -  \frac{16 \lambda \log (9Md^2)}{d} \right) \right) \leq  \exp \left( - \frac{b_\alpha}{2(M+1)} \right),
\end{align}
where the last two inequalities follow from~\eqref{eq:partial-a} and
our assumption that $\lambda \leq \frac{d}{32(M+1)\log(9Md^2)}$,
respectively. Let us adopt the convention that when we write
$\sum_\alpha$ we mean a sum over $\{\alpha \colon
|\A_{v,\alpha}|\neq 0\}$. Since $A \cup \b(A) \supseteq B(v,t)$ for
every $A \in \A_v$, then $|B(v,t)| \geq \alpha + b_\alpha$ for every $\alpha$
as above and hence \eqref{eq:P_Omega_bound} and \eqref{eq:P-Ava}
give
\begin{align*}
  \P(\Omega) &\leq 2  \P(\Omega^+) = 2\sum_{A \in \A_v}
  \P(\Omega_A^+) = 2\sum_{\alpha} \sum_{A \in \A_{v,\alpha}}
  \P(\Omega_A^+)\le 2\sum_\alpha \exp \left( - \frac{b_\alpha}{2(M+1)}
  \right)\\
  & = 2\sum_\alpha \exp \left( - \frac{b_\alpha+\alpha-\alpha}{2(M+1)}
  \right)\le 2\exp \left( - \frac{|B(v,t)|}{5(M+1)}\right)\sum_\alpha \exp \left( - \frac{3b_\alpha}{10(M+1)} +
  \frac{\alpha}{5(M+1)}\right),
\end{align*}
Thus we need only estimate the last sum. Applying
\eqref{eq:partial-a} and our assumption that $\lambda \leq
\frac{d}{32(M+1)\log(9Md^2)}$ we have
\begin{align*}
  \sum_\alpha \exp \left( - \frac{3b_\alpha}{10(M+1)} +
  \frac{\alpha}{5(M+1)}\right)&\le
  \sum_\alpha\exp\left(-\left(\frac{3d}{80\lambda(M+1)}-\frac{1}{5(M+1)}\right)\alpha\right)\\
  &\le \sum_{\alpha}\exp\left(-\left(\frac{96\log(9Md^2)}{80}-\frac{1}{10}\right)\alpha\right)
  \le \sum_\alpha \exp(-2\alpha)\le \frac{1}{2},
\end{align*}
as required.
\end{proof}

\section{Lipschitz functions on trees}

\label{sec:Lipschitz-trees}

In this section, we prove Theorem~\ref{thm:grounded}. Recall the
definitions of $\T_h^d$, $v_r$, $V_L$ and $\Lip_{V_L}(\T_h^d;M)$
from Section~\ref{sec:grounded}. Fix integers $M\ge 1$, $d\ge
40(M+1)\log(M+1)$, $t\ge 1$ and a non-leaf vertex $v$ of $\T_h^d$
(since the theorem is trivial for leaf vertices). We will estimate
the probability that a uniformly chosen random function $f \in_R
\Lip_{V_L}(\T_h^d; M)$ is in the event
\[
\Omega = \big\{f \in \Lip_{V_L}(\T_h^d;M) \colon |f(v)|
> (t-1)M \big\}.
\]
By symmetry, $\P(\Omega)=2\P(\Omega^+)$ where
\[
\Omega^+ = \{f \in \Lip_{V_L}(\T_h^d;M) \colon f(v) > (t-1)M \}.
\]
Hence it suffices to bound the probability of $\Omega^+$. It is
convenient to introduce an auxiliary graph $\tilde{\T}_h^d$ by
taking the graph $\T_h^d$ and gluing the set of leaves $V_L$ to one
new vertex $v_0$. It is clear from the definition that the
probability distribution of $f(v)$ is the same when $f \in_R
\Lip_{V_L}(\T_h^d; M)$ and when $f \in_R \Lip_{V_L}(\tilde{\T}_h^d;
M)$. Hence we may focus on bounding the probability of the event
\[
\tilde{\Omega}^+ = \{f \in \Lip_{V_L}(\tilde{\T}_h^d;M) \colon f(v) > (t-1)M
\}.
\]
We may now use the results of Sections~\ref{sec:constructing-map-T}
and \ref{sec:properties-T} to the graph $\tilde{\T}_h^d$ with the
function $k(f)\equiv 0$ defined on $\Lip_{v_0}(\tilde{\T}_h^d)$. In
particular, defining $\C_v$ and
\[
\tilde{\Omega}_A^+ = \{f \in \tilde{\Omega}^+ \colon A(f) = A\}.
\]
as in Section~\ref{sec:constructing-map-T}, we deduce from
Corollary~\ref{cor:POmegaA+} that for every $A \in \C_v$,
\begin{equation}\label{eq:tilde_omega_+_bound}
\P(\tilde{\Omega}_A^+) \leq M(2|A| +
1)(2M+1)^{|A|}\left(\frac{M}{M+1}\right)^{|\b(A)|}.
\end{equation}
As in Section~\ref{sec:bound-omega} we again have that for every
$f\in\tilde{\Omega}^+$, $B_G(v,t-1) \subseteq A(f)$. In addition,
denoting $A=A(f)$, we have $(A\cup\b(A))\cap V_L=\emptyset$ by
Claim~\ref{cl:f_on_A}, since $f(V_L)=\{0\}$. Thus, letting
\[
\A_v = \left\{A \in \C_v \colon \text{$B(v,t-1) \subseteq A$ and
$(A\cup\b(A))\cap V_L=\emptyset$}\right\},
\]
the sets $\tilde{\Omega}_A^+$ with $A \in \A_v$ form a
partition of $\tilde{\Omega}^+$. It remains to use
\eqref{eq:tilde_omega_+_bound} to bound the probability of
$\tilde{\Omega}^+$. For this we will need that subsets of the tree
which do not contain leaves have large vertex expansion.
\begin{claim}\label{cl:tree_expansion}
  If $A$ is a non-empty subset of vertices of $\T_h^d$ which does
  not contain any leaves then
  \begin{equation*}
    |\b(A)|> (d-2)|A|.
  \end{equation*}
\end{claim}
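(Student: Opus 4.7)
The plan is to use the fact that any subgraph of a tree is a forest, together with a contraction argument to exploit the leafless hypothesis.

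First, I would decompose $A$ into its connected components $A_1,\ldots,A_c$ (connected in $\T_h^d$, not in the auxiliary graph $G^{\le 2}$). Each $A_i$ induces a subtree on $|A_i|$ vertices with $|A_i|-1$ internal edges; since every vertex of $A_i$ has degree exactly $d$ (no leaves), the number of edges leaving $A_i$ equals $d|A_i|-2(|A_i|-1)=(d-2)|A_i|+2$. Next I would observe two tree-theoretic facts. (i) A vertex $w\in\b(A_i)$ is adjacent to exactly one vertex of $A_i$, for otherwise the two edges from $w$ into $A_i$ together with a path inside the connected set $A_i$ would yield a cycle in $\T_h^d$. Hence $|\b(A_i)|=(d-2)|A_i|+2$. (ii) No vertex in $A_j$ (with $j\neq i$) lies in $\b(A_i)$, for such a vertex would connect $A_i$ to $A_j$ and merge the components. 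Consequently $\b(A_i)\subseteq\b(A)$ for every $i$.

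Now I would contract each $A_i$ to a single new vertex $a_i$ inside $\T_h^d$. Since the tree remains a tree after contracting connected subsets, the resulting graph $\tilde{\T}$ is a tree whose edges from $a_i$ correspond bijectively to the edges of $\T_h^d$ leaving $A_i$; by (i) and (ii) these edges go to distinct vertices of $\b(A)$, and $a_i$ has degree $(d-2)|A_i|+2$ in $\tilde{\T}$. Consider then the subgraph of $\tilde{\T}$ induced on $\{a_1,\ldots,a_c\}\cup\b(A)$. This subgraph has $c+|\b(A)|$ vertices, and, being a subgraph of a tree, is a forest, hence has at most $c+|\b(A)|-1$ edges. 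On the other hand, it already contains all the $\sum_i|\b(A_i)|=(d-2)|A|+2c$ edges incident to the $a_i$'s.

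Combining these two bounds gives $(d-2)|A|+2c\le c+|\b(A)|-1$, i.e., $|\b(A)|\ge (d-2)|A|+c+1\ge(d-2)|A|+2>(d-2)|A|$, which is the desired inequality. There is no real obstacle here: the only subtle point is verifying (ii), which ensures that after contraction the edges from the $a_i$'s truly land in $\b(A)$ (and not in other components of $A$), so that the forest bound on $|V'|-1=c+|\b(A)|-1$ edges can be compared directly with the sum $(d-2)|A|+2c$.
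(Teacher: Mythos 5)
Your proof is correct, but it takes a genuinely different route from the paper's. The paper's proof is a short induction on $|A|$: it removes a vertex $w \in A$ farthest from the root, notes that $w$'s $d-1$ children enter $\b(A)$ without ever lying in $\b(A\setminus\{w\})$, while at most one vertex (namely $w$ itself) may be lost from the boundary, so $|\b(A)| \ge |\b(A')| + d - 2$. Your argument is instead a global, non-inductive count: decompose $A$ into its connected components $A_1,\ldots,A_c$ in $\T_h^d$, use the handshake count in each subtree to get exactly $(d-2)|A_i|+2$ edges leaving $A_i$ (each to a distinct boundary vertex, by acyclicity), then contract each $A_i$ and invoke the forest bound on the contracted tree restricted to $\{a_1,\ldots,a_c\}\cup\b(A)$ to get $(d-2)|A|+2c \le |\b(A)|+c-1$. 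Your version actually yields the slightly sharper bound $|\b(A)|\ge (d-2)|A|+c+1$, tracking the number of components, whereas the paper's induction only gives the stated strict inequality; on the other hand the paper's proof is considerably shorter and avoids both the contraction and the per-component edge count. Both rely essentially on $\T_h^d$ being a tree (the paper via the "deepest vertex" argument, you via the forest edge bound), so neither generalizes more readily than the other.
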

\begin{proof}
  By induction on $|A|$. If $|A|=1$ then $|\b(A)|=d$. If $|A|>1$,
  let $w$ be a vertex in $A$ which is farthest from the root and let
  $A'=A\setminus\{w\}$. By the induction hypothesis and our choice of $w$,
  $|\b(A)|\ge |\b(A')| - 1 + (d-1) > (d-2)|A'|+d-2 = (d-2)|A|$.
\end{proof}
The above claim applies also to sets $A\in\A_v$ since subsets $A$ of
vertices of $\T_h^d$ which satisfy $(A\cup\b(A))\cap V_L=\emptyset$
have the same boundary $\b(A)$ in both $\T_h^d$ and
$\tilde{\T}_h^d$. We continue to define $\A_{v,\alpha}$ and
$b_\alpha$ exactly as in Section~\ref{sec:bound-omega} and note
that even though $v_0$ has very high degree in $\tilde{\T}_h^d$,
it is still true that $|\A_{v,\alpha}|\le d^{4\alpha}$ as in
\eqref{eq:Ava-count}.

Combining \eqref{eq:tilde_omega_+_bound}, the bound on
$|\A_{v,\alpha}|$ and the above claim, and using a similar
calculation to \eqref{eq:P-Ava}, we have
\begin{align}
  \label{eq:grounded_P-Ava}
  \sum_{A \in \A_{v,a}} \P(\tilde{\Omega}_A^+) &
  \leq d^{4\alpha} \cdot M (2\alpha+1)(2M+1)^{\alpha}\left(\frac{M}{M+1}\right)^{b_\alpha} \\
  &\le \exp\left(-\frac{b_\alpha}{M+1}+2\log(9Md^2)\alpha\right) \\
  &\le \exp \left( - b_\alpha \left(  \frac{1}{M+1} -  \frac{2 \log (9Md^2)}{d-2} \right) \right) \le  \exp \left( - \frac{b_\alpha}{2(M+1)} \right),\\
  \nonumber
\end{align}
where the last inequality follows by noting that our assumption that
$d\ge 40(M+1)\log(M+1)$ implies
\begin{align*}
  4(M+1)\log(9Md^2)+2 &\le
  5(M+1)\log(9Md^2)=5(M+1)\log(9M)+10(M+1)\log d\\
  &\le 5(M+1)\log(9M) + \frac{d}{2}\le 20(M+1)\log(M+1)+\frac{d}{2}\le d.
\end{align*}
We continue exactly as in Section~\ref{sec:bound-omega}, using that
$b_\alpha>(d-2)\alpha$ and our assumption that $d\ge
40(M+1)\log(M+1)$, and obtain
\begin{equation*}
  \P(\Omega)\, \le\,
  \exp\left(-\frac{|B_{\tilde{\T}_h^d}(v,t)|}{5(M+1)}\right)
\end{equation*}
where $B_{\tilde{T}_h^d}(v,t)$ denotes the graph ball of radius $t$
around $v$ in $\tilde{T}_h^d$. It remains to note that if
$\dist_{\T_h^d}(v,V_L)\le t$ then $\P(\Omega)$ is trivially zero
since $f$ is $M$-Lipschitz, so that we can replace
$B_{\tilde{T}_h^d}(v,t)$ by $B_{\T_h^d}(v,t)$. Finally we note that
when $\dist_{\T_h^d}(v,V_L)\ge t$ we have $|B_{{\T}_h^d}(v,t)|\ge
d(d-1)^{t-1}$.

\section{Homomorphisms}

\label{sec:hom}

Assume that $G$ is a $d$-regular $2n$-vertex $\lambda$-bi-expander
that is good ($\lambda \ll d/\log d$). Let $V_0$ and $V_1$ be the
two color classes of $G$ and let $v$ and $v_0$ be two (not
necessarily distinct) vertices of $G$. Without loss of generality,
we assume that $v_0 \in V_0$. Recall the definition of $\phase$ from
Lemma~\ref{lemma:phase_hom}. We estimate the probability that, given
an integer $t \geq 2$, a uniformly chosen random function $f \in_R
\Hom_{v_0}(G)$ is in the event
\[
\Omega = \{f \in \Hom_{v_0}(G) \colon |f(v) - \phase(f)| > t\}.
\]
Our proof of Theorem~\ref{thm:phase_dev_hom} closely follows the proof of Theorem~\ref{thm:phase_dev} given in Section~\ref{sec:Lipschitz}. We construct a map $T \colon \Omega \to \cP(\Hom_{v_0}(G))$ such that the set $T(f)$ is large for every $f \in \Omega$ and the set $\{f \in \Omega \colon g \in T(f) \}$ is small for every $g$.
We then derive a bound on $\P(\Omega)$ using Lemma~\ref{lemma:double-counting} and some easy counting. We start by splitting $\Omega$ into two parts:
\[
\Omega^+ = \{f \in \Omega \colon f(v) > \phase(f) + t\}
\quad \text{and} \quad \Omega^- = \{f \in \Omega \colon f(v) < \phase(f) - t\} .
\]
By definition of phase,
since $G$ is a good expander,
for every $f \in \Omega$
there is exactly one $k$ satisfying \eqref{eq:phase_def_hom}
with $i^*$.
This implies that the map $f \mapsto -f$ is one-to-one from
$\Omega^+$ to $\Omega^-$,
and vice versa.
We can, therefore, focus on the event $\Omega^+$.

\subsection{Defining the transformation $T$}

\label{sec:map-t-hom}

For $f\in\Omega^+$, write $k=k(f)=\phase(f)$ and let $A(f)$ be the
connected component of the vertex $v$ in the subgraph of $G^{\leq
2}$ induced by the set of vertices $\{w \colon f(w) > k+1\}$.
We further partition the event $\Omega^+$. Let $\C_v$ be the family of all $A \subseteq V(G)$ such that $v \in A$ and $A$ is connected in $G^{\leq 2}$. For every $A \in \C_v$, let
\[
\Omega_A^+ = \{f \in \Omega^+ \colon A(f) = A\}.
\]
\begin{claim}
  \label{cl:f_on_A-hom}
  Let $A \in \C_v$ and $f \in \Omega_A^+$. Set $X=\b(A)$. Then the following properties hold:
  \begin{enumerate}
  \item \label{item:fA-hom}
$\min f(A) > k+1$,
  \item \label{item:fX-hom}
   $f(X) = \{k+1\}$, and
  \item \label{item:fd2A-hom}
   $f (\b^2(A)) = \{k\}$.
  \end{enumerate}
\end{claim}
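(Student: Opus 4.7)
The plan is to verify the three items in turn, using only the definition of $A(f)$ as the connected component of $v$ in the subgraph of $G^{\leq 2}$ induced by $\{w : f(w) > k+1\}$ together with the fact that $f$ is a $\Z$-homomorphism, so $|f(u) - f(w)| = 1$ for every edge $\{u,w\}$ of $G$. Throughout, let $k = \phase(f)$.

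Item \ref{item:fA-hom} is immediate: every $w \in A$ belongs to $\{w : f(w) > k+1\}$. For item \ref{item:fX-hom}, I would fix $x \in X = \b(A)$ and choose some $a \in A$ with $\{a,x\} \in E(G)$. The homomorphism condition gives $f(x) = f(a) \pm 1$, and item \ref{item:fA-hom} yields $f(a) \geq k+2$, so $f(x) \geq k+1$. If $f(x) \geq k+2$, then $x \in \{w : f(w) > k+1\}$ and $x$ is adjacent to $a \in A$ in $G$, hence in $G^{\leq 2}$; by connectedness of the component defining $A$, this would force $x \in A$, contradicting $x \in \b(A)$. Thus $f(x) = k+1$.

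For item \ref{item:fd2A-hom}, I would take $y \in \b^2(A) = N(A \cup X) \setminus (A \cup X)$. Since $y \notin X = N(A) \setminus A$ and $y \notin A$, we have $y \notin N(A)$, so any neighbor $z \in A \cup X$ of $y$ must lie in $X$. By item \ref{item:fX-hom}, $f(z) = k+1$, hence $f(y) \in \{k, k+2\}$. Suppose $f(y) = k+2$. Then $y \in \{w : f(w) > k+1\}$, and, picking $a \in A$ with $\{a,z\} \in E(G)$, the path $a - z - y$ shows $\dist_G(a,y) \leq 2$, so $\{a,y\} \in E(G^{\leq 2})$. The connected-component definition of $A$ then forces $y \in A$, a contradiction. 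Therefore $f(y) = k$.

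There is no real obstacle here; the argument is a direct application of the connected-component definition of $A$ combined with the unit-step property of $f$. The one subtlety is interpreting the set equalities in items \ref{item:fX-hom} and \ref{item:fd2A-hom}: if $X$ or $\b^2(A)$ happens to be empty, the statements should be read vacuously, and in the nonempty case the above arguments pin down the unique value. Nonemptiness of $X$ in particular follows from connectedness of $G$ together with $A \neq V(G)$, which is guaranteed by the existence of vertices in $V_{i^*}$ on which $f$ equals $k = \phase(f)$.
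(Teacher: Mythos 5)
Your proof is correct and uses exactly the approach the paper has in mind: the paper dispatches this claim with the single sentence that it ``is immediate from the definition of $A(f)$ and the definition of homomorphism height functions,'' and your write-up simply spells out the three checks that sentence is gesturing at, mirroring the analogous argument given explicitly for the $M$-Lipschitz version (Claim~\ref{cl:f_on_A}). The side remarks on the vacuous-set reading and on nonemptiness of $X$ are sound and consistent with how the claim is actually used later.
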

The proof is immediate from the definition of $A(f)$ and the
definition of homomorphism height functions. Here is a reasonable
guess on how to define $T(f)$ for $f\in\Omega^+$. Write $A=A(f)$ and
$X=\b(A)$, and for any $s \in \{-1,1\}^X$, define $h_s \colon V(G)
\to \Z$ by
\[
h_s(w) =
\begin{cases}
  f(w) - 2 & w \in A, \\
  k + s_w & w \in X, \\
  f(w) & w \not\in A \cup X,
\end{cases}
\]
The following statement is a direct consequence of
Claim~\ref{cl:f_on_A-hom}. We omit its proof, which is a simple case
analysis similar to the proof of Claim~\ref{claim:S}.

\begin{claim}
  \label{claim:varepsilon}
  For every $f\in\Omega^+$ and $s \in \{-1,1\}^X$, the function $h_s$ is a homomorphism function (i.e., $|h_s(v)-h_s(w)|=1$ whenever $v$ and $w$ are adjacent in $G$, but there is no guarantee that $h_s(v_0)=0$).
\end{claim}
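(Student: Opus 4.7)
The plan is to fix an arbitrary edge $\{w_1,w_2\} \in E(G)$ and verify $|h_s(w_1) - h_s(w_2)| = 1$ by case analysis on the positions of its endpoints relative to the partition $V(G) = A \sqcup X \sqcup (V(G)\setminus (A\cup X))$. Because $X = \b(A)$, no edge of $G$ joins $A$ directly to the complement of $A \cup X$, so only five configurations remain.

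Two are trivial: when $w_1,w_2$ both lie in $A$, or both lie outside $A \cup X$, the map $h_s$ agrees at both endpoints with a translate of $f$ (a shift by $-2$ inside $A$, and the identity outside $A\cup X$), so $h_s(w_1) - h_s(w_2) = f(w_1) - f(w_2) = \pm 1$. In the mixed case $w_1 \in A$, $w_2 \in X$, part~2 of Claim~\ref{cl:f_on_A-hom} gives $f(w_2) = k+1$ while part~1 forces $f(w_1) \geq k+2$; the homomorphism property of $f$ then pins $f(w_1) = k+2$, so $h_s(w_1) = k$ and $h_s(w_2) = k + s_{w_2}$, differing by $\pm 1$. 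In the mixed case $w_1 \in X$, $w_2 \notin A \cup X$, the vertex $w_2$ must lie in $\n(X) \setminus (A \cup X) \subseteq \b^2(A)$, where part~3 of Claim~\ref{cl:f_on_A-hom} gives $f(w_2) = k$; hence $h_s(w_2) = k$ and $h_s(w_1) = k + s_{w_1}$, again with difference $\pm 1$.

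The remaining configuration, $w_1, w_2 \in X$, is where bipartiteness enters. Adjacent vertices of $G$ lie in distinct color classes, so $f(w_1)$ and $f(w_2)$ must have different parities; but both equal $k+1$ by part~2 of Claim~\ref{cl:f_on_A-hom}, a contradiction. Thus this configuration cannot occur, completing the verification. The only subtlety beyond routine unpacking is precisely this use of bipartiteness to exclude the $X$-$X$ case; it has no counterpart in the Lipschitz analogue (Claim~\ref{claim:S}) because there $X$ carries a whole interval of values rather than a single value, and the analogous case is instead absorbed by the bounds on $u_x$.
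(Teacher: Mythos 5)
Your proof is correct and carries out exactly the case analysis the paper alludes to (the paper omits the proof, referring to it as ``similar to the proof of Claim~\ref{claim:S}''). You correctly identify the one genuine wrinkle absent from the Lipschitz analogue: since $s_{w_1},s_{w_2}\in\{-1,1\}$ would give $|s_{w_1}-s_{w_2}|\in\{0,2\}$, the $X$--$X$ case must be shown vacuous rather than merely bounded, and your observation that $f(X)=\{k+1\}$ forces all of $X$ into a single color class (or, equivalently, directly contradicts $|f(w_1)-f(w_2)|=1$) does this cleanly.
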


Again, we use the ``shift'' operator $P_{v_0}$
defined by $P_{v_0}(h) = h - h(v_0)$.
Recall that $h \in \Hom(G)$ if and only if $P_{v_0}(h) \in \Hom_{v_0}(G)$.

\begin{claim}
  \label{claim:one-to-one-hom}
  For every $f\in\Omega^+$, $P_{v_0}$ is one-to-one on $\{h_s \colon s \in \{-1,1\}^X\}$.
\end{claim}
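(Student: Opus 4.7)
The plan is to exhibit a left inverse to the map $s \mapsto P_{v_0}(h_s)$. Two functions $h_s$ and $h_{s'}$ are sent to the same element by $P_{v_0}$ if and only if their difference is a constant function. So the entire task reduces to showing: if $h_s - h_{s'}$ is constant on $V(G)$, then $s = s'$.

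The key observation I would use is that the value $h_s(v)$ does not depend on $s$. Indeed, since $v \in A$ (by the definition of $A(f)$), we have $h_s(v) = f(v) - 2$ for every $s \in \{-1,1\}^X$. Consequently, if $h_s - h_{s'}$ is a constant, evaluating at $v$ forces that constant to be zero, hence $h_s = h_{s'}$ as functions on $V(G)$. Restricting to any vertex $x \in X$ then gives $k + s_x = k + s'_x$, so $s_x = s'_x$; as this holds for all $x \in X$, we conclude $s = s'$.

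Equivalently (and this is how the corresponding Lipschitz-case Claim~\ref{claim:one-to-one} is phrased), one can explicitly recover $h_s$ from $P_{v_0}(h_s)$ by the formula
\[
h_s \;=\; P_{v_0}(h_s) \;+\; \bigl(f(v) - 2 - P_{v_0}(h_s)(v)\bigr),
\]
since $h_s - P_{v_0}(h_s)$ is the constant $h_s(v_0)$, and the shift can be pinned down using any vertex whose $h_s$-value is $s$-independent—$v$ being the natural choice.

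There is no real obstacle here: the only thing to verify is that $v$ truly belongs to $A$ (so that $h_s(v)$ is given by the first clause in the definition of $h_s$, namely $f(v) - 2$, rather than by the clauses depending on $s$), and this is immediate from the construction of $A(f)$ as the connected component of $v$. Once that is noted, the argument is a one-line evaluation.
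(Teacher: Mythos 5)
Your argument is essentially the paper's: both recover the shift constant by evaluating at a vertex whose $h_s$-value is independent of $s$, and then invert $P_{v_0}$ explicitly. The only cosmetic difference is the choice of witness vertex — you use $v\in A$ directly (with $h_s(v)=f(v)-2$), whereas the paper picks $w\in A\cap N(X)$ (with $h_s(w)=k$, via Claim~\ref{cl:f_on_A-hom}), a choice that dovetails with the later reconstruction in Lemma~\ref{lemma:hom-main}; both are correct.
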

\begin{proof}
  Let $s \in \{-1,1\}^X$ and let $w \in A \cap N(X)$.
  Claim~\ref{cl:f_on_A-hom}, since $f$ is a homomorphism, tells us that
  $f(w) = k+2$.
  So, $h_s(w) = f(w) - 2 = k$.
  Since $h_s - P_{v_0}(h_s)$ is a constant function,
  \[
  h_s = P_{v_0}(h_s) + (k - P_{v_0}(h_s)(w)).
  \qedhere
  \]
\end{proof}

Finally, define $T \colon \Omega^+ \to \cP(\Hom_{v_0}(G))$ by
\begin{equation}
  \label{eq:Tf-def-hom}
  T(f) =  \left\{P_{v_0}(h_s) \colon  s \in \{-1,1\}^X \right\}.
\end{equation}

\subsection{Properties of $T$}

\begin{lemma}
  \label{lemma:hom-main}
  For every $A \in \C_v$, the following holds:
  \begin{enumerate}
  \item \label{item:hom-main-1}
    If $f \in \Omega_A^+$, then $|T(f)| = 2^{|\b(A)|}$.
  \item \label{item:hom-main-2}
    For every $h \in \Hom_{v_0}(G)$, we have
    \[
    |\{f \in \Omega_A^+ \colon h \in T(f)\}| \leq 2.
    \]
  \end{enumerate}
\end{lemma}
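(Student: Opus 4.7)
For part~\ref{item:hom-main-1}, the plan is to show that $s \mapsto P_{v_0}(h_s)$ is injective on $\{-1,1\}^X$ by factoring it as $s \mapsto h_s$ followed by $h_s \mapsto P_{v_0}(h_s)$. Each factor is easy: if $s \neq s'$, any coordinate $w \in X := \b(A)$ with $s_w \neq s'_w$ witnesses $h_s(w) = k + s_w \neq k + s'_w = h_{s'}(w)$, while the second arrow is injective by Claim~\ref{claim:one-to-one-hom}. Composing gives $|T(f)| = |\{-1,1\}^X| = 2^{|\b(A)|}$.

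For part~\ref{item:hom-main-2}, my strategy is to show that, once $A$ (and hence $X$) is fixed, any $f \in \Omega_A^+$ with $h \in T(f)$ is uniquely determined by two auxiliary integers: the shift $c := h_s(v_0)$, where $s \in \{-1,1\}^X$ is the unique element with $h = P_{v_0}(h_s)$, and the height $k := \phase(f)$. Using $h = h_s - c$ together with the definition of $h_s$, one recovers
\[
f(w) = \begin{cases} h(w) + c + 2 & \text{if } w \in A, \\ k+1 & \text{if } w \in X, \\ h(w) + c & \text{if } w \not\in A \cup X, \end{cases}
\]
so it suffices to bound the number of admissible pairs $(c,k)$ by $2$.

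I then split into three cases depending on the location of $v_0$. Plugging $f(v_0) = 0$ into the appropriate line of the recipe forces $c = -2$ when $v_0 \in A$, $c = 0$ when $v_0 \notin A \cup X$, and $k = -1$ when $v_0 \in X$. In the first two cases $c$ is uniquely pinned down, and $k$ can then be read off from any $w \in A$ adjacent to some vertex of $X$---such a $w$ exists because $\b(A) \neq \emptyset$, using that $A$ is a proper subset of $V(G)$ by Lemma~\ref{lemma:phase_hom} and that $G$ is connected---since Claim~\ref{cl:f_on_A-hom} forces $f(w) = k+2$, giving $k = h(w) + c$. So in these two cases there is at most one candidate $f$. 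In the remaining case $v_0 \in X$, $k = -1$ is forced while $c = k + s_{v_0}$ with $s_{v_0} \in \{-1, 1\}$ leaves $c \in \{-2, 0\}$, yielding at most two candidates.

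The only real subtlety is the case $v_0 \in X$: there $f(v_0) = 0$ constrains $k$ but not $c$, so the factor of two in the conclusion is genuinely unavoidable rather than an artefact of the argument. Everything else is bookkeeping once the parametrization $(c,k)$ is identified, and the resulting bound then plugs directly into Lemma~\ref{lemma:double-counting} in the subsequent section.
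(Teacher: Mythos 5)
Your proposal is correct and follows essentially the same route as the paper. Part~\ref{item:hom-main-1} is exactly the paper's argument (composition of two injections, one of which is Claim~\ref{claim:one-to-one-hom}). For part~\ref{item:hom-main-2}, both you and the paper observe that the pair consisting of the additive shift $h_s(v_0)$ and the phase $k$ reconstructs $f$ from $h$ on a fixed $A$, then bound the number of admissible pairs by $2$; the paper groups $v_0 \in A$ and $v_0 \in X$ into a single case (``$h_s(v_0) \in \{0,-2\}$''), whereas you split them into three cases and pin down one coordinate in each, which arrives at the same count. (As you note at the end, the factor $2$ in the case $v_0 \in X$ is not tight---one could additionally observe that any $w \in A \cap \n(X)$ with $f(w) = k+2$ also determines $c$ there---but the looser bound of $2$ is exactly what the paper uses and suffices for the application.)
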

\begin{proof}
  {\it \ref{item:hom-main-1}.}
  Follows directly from~\eqref{eq:Tf-def-hom} and Claim~\ref{claim:one-to-one-hom}.

{\it \ref{item:hom-main-2}.}
  Let $X = \b(A)$ and fix some $h \in \Hom_{v_0}(G)$ and an integer $k$. We claim that there is at most one $f \in \Omega_A^+$ such that $h \in T(f)$ and $\phase(f) = k$. First, since $f \in \Omega_A^+$, we have $f(w) = k+1$ for all $w \in X$. Next, recall that $h = P_{v_0}(h_s)$ for some $s \in \{-1,1\}^X$, that $f(w) = h_s(w)$ for all $w \not\in A \cup X$,
  and that $f(w) = h_s(w) + 2$ for all $w \in A$. Hence, it suffices to reconstruct $h_s$ which amounts to reconstructing $h_s(v_0)$. For this, fix an arbitrary $w \in A \cap \n(X)$ and observe, as in the proof of Claim~\ref{claim:one-to-one-hom}, that $h_s(v_0) = h_s(w) - h(w) = k - h(w)$.

 Therefore, in order to bound the number of possibilities for $f$, it suffices to bound the number of integers $k$ for which there exists an $f$ as above.

  Recall that $h_s(w) = k = h_s(v_0) + h(w)$ for any $w \in A \cap \n(X)$. There are two cases.
  (i) If $v_0 \notin A \cup X$, then $h_s(v_0) = f(v_0) = 0$ and so $k$ is determined by $h$.
  (ii) If $v_0 \in A \cup X$, then $h_s(v_0)$ can only take values $0$ and $-2$, and hence $k$ can take at most $2$ values.
\end{proof}

Concluding this part of the discussion,
Lemmas~\ref{lemma:double-counting} and~\ref{lemma:hom-main}
imply the following bound on the probability of $\Omega_A^+$.

\begin{cor}
  \label{cor:POmegaA+-hom}
  For every $A \in \C_v$,
  \[
  \P(\Omega_A^+) \leq 2^{1-|\b(A)|}.
  \]
\end{cor}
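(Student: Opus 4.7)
The plan is to apply Lemma~\ref{lemma:double-counting} directly to the transformation $T \colon \Omega^+ \to \cP(\Hom_{v_0}(G))$ constructed in Section~\ref{sec:map-t-hom}, taking $P = \Omega_A^+$ and $Q = \Hom_{v_0}(G)$. The two ingredients needed — a uniform lower bound on $|T(f)|$ for $f \in P$ and a uniform upper bound on the number of preimages $|\{f \in P \colon h \in T(f)\}|$ for $h \in Q$ — are precisely the two parts of Lemma~\ref{lemma:hom-main}.

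Concretely, part~\ref{item:hom-main-1} of Lemma~\ref{lemma:hom-main} lets me set $\beta = 2^{|\b(A)|}$, since $|T(f)| = 2^{|\b(A)|}$ for every $f \in \Omega_A^+$. Part~\ref{item:hom-main-2} lets me set $\alpha = 2$, since every $h \in \Hom_{v_0}(G)$ has at most $2$ preimages in $\Omega_A^+$ under $T$. Lemma~\ref{lemma:double-counting} then gives
\[
\frac{|\Omega_A^+|}{|\Hom_{v_0}(G)|} \;\leq\; \frac{\alpha}{\beta} \;=\; \frac{2}{2^{|\b(A)|}} \;=\; 2^{1-|\b(A)|}.
\]
Since $f \in_R \Hom_{v_0}(G)$ is chosen uniformly at random, the left-hand side equals $\P(\Omega_A^+)$, yielding the desired inequality.

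Because the substantive combinatorial work has already been carried out in constructing $T$ and verifying Lemma~\ref{lemma:hom-main}, no real obstacle remains at this stage; the corollary is a one-line consequence of double counting. The only thing to be slightly careful about is the translation from counting to probability, which is immediate from uniformity of the measure on $\Hom_{v_0}(G)$.
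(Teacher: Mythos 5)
Your proof is correct and follows exactly the paper's intended route: the paper states that Lemmas~\ref{lemma:double-counting} and~\ref{lemma:hom-main} imply the corollary without elaborating, and you have filled in precisely those details, taking $P=\Omega_A^+$, $Q=\Hom_{v_0}(G)$, $\beta=2^{|\b(A)|}$ from Lemma~\ref{lemma:hom-main}\ref{item:hom-main-1}, and $\alpha=2$ from Lemma~\ref{lemma:hom-main}\ref{item:hom-main-2}. The translation to probability via uniformity of the measure is also correct.
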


\subsection{Deriving the bound on $\P(\Omega)$}

\label{sec:bound-omega-hom}

\begin{proof}[Proof of Theorem \ref{thm:phase_dev_hom}]
Fix some $f \in \Omega^+$ and let $k = \phase(f)$.
Recall the definition of $A(f)$ from Section~\ref{sec:map-t-hom} and observe that by~\eqref{eq:phase_hom-cor} in Lemma~\ref{lemma:phase_hom},
\[
|A(f)| \leq |\{w \colon f(w) > k + 1\}| \leq |\{w \colon |f(w) - k| \geq 2 \}| \leq \frac{3\lambda n}{d}.
\]
Since $f(v) > k + t$ and $f$ is a homomorphism,
$B(v,t-1) \subseteq A(f)$.
Let
\[
\A_v = \left\{A \in \C_v \colon \text{$B(v,t-1) \subseteq A$ and $|A| \leq \frac{3\lambda n}{d}$}\right\}.
\]
Clearly, the sets $\Omega_A^+$ defined in Section~\ref{sec:map-t-hom}, with $A \in \A_v$, form a partition of $\Omega^+$.

We are now ready to derive our bound on $\P(\Omega)$. For an integer $\alpha$, let
\[
\A_{v,\alpha} = \{A \in \A_v \colon |A| = \alpha \}.
\]
Since every $A \in \A_v$ is connected in $G^{\leq 2}$ and the maximum degree of $G^{\leq 2}$ is at most $d^2$,
by Lemma~\ref{lemma:connected-sets},
\begin{equation}
  \label{eq:Ava-count-hom}
  |\A_{v,\alpha}| \leq d^{4\alpha}.
\end{equation}
Let $\alpha$ be such that $\A_{v,\alpha}$ is
non-empty. In particular, $|B(v,t-1)| \leq \alpha \leq \frac{3\lambda n}{d}$.
Let
\[
b_\alpha = \min\left\{|\b(A)| \colon A \in \A_{v,\alpha} \right\} .
\]
By Corollary~\ref{cor:bi-exp-bdry-growth} and since $\alpha\le
\frac{3\lambda n}{d}$ and $\lambda \leq \frac{d}{12}$,
\begin{equation}
  \label{eq:partial-a-hom}
  b_\alpha \geq \min\left\{\frac{n}{4}, \frac{d^2}{5\lambda^2} \alpha \right\} \geq \frac{d}{12 \lambda} \alpha .
\end{equation}
Hence, by Corollary~\ref{cor:POmegaA+-hom},
\eqref{eq:Ava-count-hom},\eqref{eq:partial-a-hom} and since $G$ is a
good bi-expander, for every $\alpha$,
\begin{equation}
  \label{eq:P-Ava-hom}
  \sum_{A \in \A_{v,\alpha}} \P(\Omega_A^+) \leq d^{4\alpha} \cdot
  2^{1-b_\alpha} \leq
  \exp\left(4\alpha\log d - (b_\alpha-1)\log 2\right) \leq 2\exp(-b_\alpha/2).
\end{equation}
Finally, a computation analogous to the one given in the end of Section~\ref{sec:bound-omega} shows that~\eqref{eq:P-Ava-hom} implies
\[
\P(\Omega) \leq 2  \P(\Omega^+) \leq  \exp\left(-\frac{|B(v,t)|}{3}\right).
\qedhere
\]
\end{proof}

\section{Summary and future work}
\label{sec: sum}

This work investigates the typical behavior of Lipschitz functions
on expander graphs. The general understanding obtained is that if
the graph is a ``good'' expander (in an exact numerical sense) then
a random Lipschitz function on it is unlikely to fluctuate much at
any given vertex and is unlikely to have its maximum value larger
than $\log(\log n)$, where $n$ is the number of vertices in~$G$.
Similar results are obtained also for homomorphism height functions
on bi-expanders and for grounded Lipschitz functions on regular
trees.

There are 3 parameters influencing our results: the regularity $d$
of the graph, the expansion parameter $\lambda$ of the graph, and
the maximal slope $M$ of our functions. Our results for Lipschitz
functions hold under the assumption that
\begin{equation*}
  \lambda \leq \frac{d}{32(M+1)\log (9Md^2)}.
\end{equation*}
It is natural to ask how sharp is this condition. For example, do
similar results continue to hold when $\lambda$ is only slightly
less than $d$ (e.g., $\lambda=(1-\epsilon)d$)? or when $d$ is very
small (e.g., 3-regular expanders)? In another direction, recalling
that $\lambda$ cannot be significantly smaller than $\sqrt{d}$, we
see that even on the best of expanders, our results are limited to
$M<O(\sqrt{d}/\log d)$. Do similar results continue to hold for
larger $M$? Here, one may ask the same question for the limiting
continuous model (as $M\to\infty$) in which one samples a random
continuous-Lipschitz function $f:V(G)\to\R$, i.e., a uniform
function (in the sense of Lebesgue measure) from the set of
\emph{real-valued} functions satisfying $|f(v)-f(w)|\le 1$ for
adjacent $v$ and $w$, and $f(v_0)=0$ for some fixed vertex $v_0$.

We remark that our results for grounded Lipschitz functions on trees
apply under the weaker assumption that $M<O(d/\log d)$, see Theorem~\ref{thm:grounded}.
In a subsequent work~\cite{trees}, using a different approach
than the one used here,
we will show that variants of our theorem for
$d$-regular trees continue to hold for arbitrary $M$.

Investigating random Lipschitz functions is of interest on many
graphs. Can one obtain general necessary and sufficient conditions
on the graph for typical Lipschitz functions to be flat? As in the
theory of random surfaces, can the behavior of random Lipschitz
functions be related to the behavior of the Gaussian free field (see
also \cite{BenSch})? As mentioned already in the introduction, the
behavior of random homomorphism functions is now reasonably
understood on the hypercube~\cite{Gal, Kah2} and on (finite
boxes of) the lattice $\Z^d$ with large $d$ \cite{Pel}. In addition,
Engbers and Galvin \cite{EngGal2, EngGal1} use entropy methods (first
introduced by Kahn \cite{Kah1}) to obtain very general results on
graph homomorphisms on hypercube and certain bipartite
graphs. In a subsequent work~\cite{Hhom}, we will show how the methods
developed in this paper can be adapted to yield similar results about general graph
homomorphisms on expanders. It seems natural to ask what
the entropy methods used by Engbers, Galvin, and Kahn would yield
for the fluctuations of $M$-Lipschitz functions.

Lastly, we mention the tantalizing question of understanding random
homomorphism and Lipschitz functions on the two-dimensional lattice
$\Z^2$. It is conjectured that flatness no longer holds for this
graph but very little is known. Investigating this model is related
to well-known models of statistical physics such as square-ice, the
6-vertex model and 3-colorings (the antiferromagnetic 3-state potts
model). See \cite{Pel} for a discussion and simulation results.

\paragraph*{Acknowledgement}

We thank Itai Benjamini for suggesting 
the problem of understanding the typical behavior of
Lipschitz functions on expander graphs.

\bibliographystyle{amsplain}

\providecommand{\bysame}{\leavevmode\hbox to3em{\hrulefill}\thinspace}
\providecommand{\MR}{\relax\ifhmode\unskip\space\fi MR }
\providecommand{\MRhref}[2]{%
  \href{http://www.ams.org/mathscinet-getitem?mr=#1}{#2}
}
\providecommand{\href}[2]{#2}

\end{document}